\begin{document}


  \def\Bbb#1{\mbox{\sf #1}} 
%
\def\Real{\mathop{\rm Re}}      
\def\Imag{\mathop{\rm Im}}      
%

\newtheorem{definition}{Definition}[section]
\newtheorem{theorem}[definition]{Theorem}
\newtheorem{corollary}[definition]{Corollary}
\newtheorem{lemma}[definition]{Lemma}
\newtheorem{proposition}[definition]{Proposition}
\newtheorem{notation}[definition]{Notation}

\title[Directional Recurrence]{Directional Recurrence for Infinite Measure Preserving $\mathbb Z^d$ actions}
\author{Aimee S. A. Johnson}
\address[Johnson]{Department of Mathematics and Statistics, Swarthmore College, Swarthmore, PA 19081}
 
\author{Ay\c se A. \c Sah\.in }
\address[\c Sahin]{Department of Mathematical Sciences, DePaul University, 2320 N. Kenmore Ave., Chicago, IL 60626}

\dedicatory{Dedicated to the memory of Daniel J. Rudolph}
\date{May 8, 2014}

\subjclass[2000]{Primary 37A15, 37A40; Secondary 37A35}
\keywords{directional dynamics, infinite measure preserving group actions, conservative group actions}
\maketitle

\begin{abstract}
We define directional recurrence for infinite measure preserving $\mathbb Z^d$ actions both intrinsically and via the unit suspension flow and prove that the two definitions are equivalent. We study the structure of the set of recurrent directions and
show it is always a $G_{\delta}$ set.  We construct an example of a recurrent action with no recurrent directions, answering a question posed in a 2007 paper of Daniel J. Rudolph.  We also show by example that it is possible for a recurrent action to not be recurrent in an irrational direction
even if all its sub-actions are recurrent.  
\end{abstract}

\section{Introduction}
Given a dynamical system defined by the action of a group $G$, it is natural to study the sub-dynamics of the action.  In particular, one can ask what dynamical properties of the action of $G$ are inherited by the dynamical systems one obtains by restricting the original action to sub-groups of $G$.  In the 1980's Milnor introduced the more general idea of {\it directional dynamics} \cite{Mil}.  He defined the directional entropy of a $\mathbb Z^d$ action in all (i.e. including irrational) directions.  The study of directional entropy has been a productive  line of research  (see for example \cite{Si},  \cite{POsaka}, \cite{PIsrael}, \cite{RSdirent}).  In addition, the idea of defining directional dynamical properties more generally has led to other advances in dynamics, most notably expansive sub-dynamics introduced in \cite{BL}.  In this paper we define directional recurrence for infinite measure preserving $\mathbb Z^d$ actions and we study the structure of the set of recurrent directions under the assumption that the action is also recurrent. 

The motivation for the project originally was a question posed by Rudolph \cite{Rudfol} in response to Feldman's proof in \cite{Fel} of the ratio ergodic theorem for conservative $\mathbb Z^d$ actions.  Feldman's proof required that  the generators of the action also be recurrent.  Rudolph asked in \cite{Rudfol} whether this was an additional assumption or if it is the case that the recurrence properties of a group action are necessarily inherited by its sub-actions.  In 2008 the authors answered the question in the negative.  

\begin{theorem}\label{dnorat}
There exists an infinite measure preserving, recurrent, and ergodic $\mathbb Z^2$ action, $(X,\mu,\{T^{\vec n}\}_{\vec n\in\mathbb Z^2})$, on a $\sigma$-finite Lebesgue space, with the property that $(X,\mu,\{T^{k\vec n}\}_{k\in\mathbb Z})$ is not recurrent for all $\vec n\in\mathbb Z^2$.
\end{theorem}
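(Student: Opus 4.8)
The plan is to exhibit the action as a translation action on the line whose ``displacement'' homomorphism has irrational slope; irrationality simultaneously forces every line through the origin in $\mathbb Z^2$ to drift off to infinity while leaving the full two-parameter family recurrent. Concretely, I would take $X=\mathbb R$ with Lebesgue measure $\mu$, fix two rationally independent reals $a,b$ (say $a=1$, $b=\sqrt2$), and set $T^{\vec n}x=x+an_1+bn_2$ for $\vec n=(n_1,n_2)$. Rational independence makes $\vec n\mapsto an_1+bn_2$ injective, so this is a faithful, free, measure preserving $\mathbb Z^2$ action on a $\sigma$-finite Lebesgue space, and $\mu$ is infinite; the only substantive things to check are non-recurrence of every sub-action, recurrence of the whole action, and ergodicity.

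Non-recurrence of the sub-actions is immediate and is where the irrationality pays off. For $\vec n\neq\vec 0$ the generator $T^{\vec n}$ is translation by the single real number $c(\vec n)=an_1+bn_2$, and rational independence of $a,b$ gives $c(\vec n)\neq0$. Translation of $\mathbb R$ by a nonzero constant is totally dissipative (the interval $[0,|c(\vec n)|)$ is a wandering set whose integer translates tile $\mathbb R$), so $(X,\mu,\{T^{k\vec n}\}_{k\in\mathbb Z})$ is not recurrent. Because this argument uses only $c(\vec n)\neq0$, it handles every $\vec n\neq\vec 0$ simultaneously.

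The conceptual heart — and the step I expect to be the main obstacle to state cleanly — is \emph{recurrence of the full action}, since it seems paradoxical that the whole action recurs when every line runs off to infinity. The resolution is that the displacement group $\Gamma=a\mathbb Z+b\mathbb Z$ is a dense subgroup of $\mathbb R$, so although no single line returns near the origin, there are infinitely many $\vec n$ with $c(\vec n)$ arbitrarily small. I would prove conservativity by ruling out wandering sets: a wandering set of positive measure contains a bounded wandering subset $W\subseteq[-M,M]$ of positive measure, and then the translates $W+\gamma$ over the infinitely many $\gamma\in\Gamma\cap(0,1)$ would be pairwise disjoint subsets of $[-M,M+1]$, each of measure $\mu(W)>0$, contradicting $\mu([-M,M+1])<\infty$. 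Hence no wandering set of positive measure exists and the action is recurrent.

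Finally, ergodicity follows from density of $\Gamma$ together with continuity of translation: the set of $t\in\mathbb R$ fixing a given invariant set mod $\mu$ is a closed subgroup containing the dense group $\Gamma$, hence all of $\mathbb R$, and a Lebesgue set invariant under every real translation is null or co-null. If one instead wants a combinatorially explicit, rank one model rather than this flow-embedded example, I would realize the same displacement cocycle over an ergodic probability preserving $\mathbb Z^2$ action by a cutting-and-stacking tower; there the work shifts to verifying that the cocycle has the prescribed irrational drift in each direction (yielding transience) while its level sets recur along $\mathbb Z^2$ (yielding conservativity), but the translation model already supplies every property demanded by the theorem.
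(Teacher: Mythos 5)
Your proof is correct, but it follows a genuinely different and far more elementary route than the paper's. The paper proves Theorem~\ref{dnorat} by a rank one cutting-and-stacking construction (the same example establishes the stronger Theorem~\ref{none}): the placement vectors of the tower slices are chosen subject to the geometric conditions \eqref{grow}--\eqref{nointer}, so that the times of strong recurrence of each tower avoid the tunnels of all previously created recurrence times; this kills recurrence along every direction, rational and irrational alike, while recurrence and ergodicity of the full action come for free from rank-one-ness. Your example instead realizes the action as the translation action $T^{\vec n}x = x + n_1 + n_2\sqrt{2}$ on $\mathbb R$ with Lebesgue measure: non-recurrence of every sub-action is immediate from injectivity of the displacement homomorphism $c(\vec n)=n_1+n_2\sqrt{2}$, conservativity follows from your packing argument (which is sound, granted the standard fact that for a measure preserving action of a countable group a failure of recurrence yields a genuine wandering set of positive measure after deleting a null set), and ergodicity follows from the closed-subgroup argument; all three steps check out. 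The trade-off is that your example cannot be upgraded to the paper's Theorem~\ref{none}: it necessarily has a nonempty set of recurrent directions. Indeed, a vector $\vec n$ lies in the $\epsilon$-tunnel of the direction $\theta_0$ of the line $x+y\sqrt{2}=0$ precisely when $|c(\vec n)|<\sqrt{3}\,\epsilon$, and there are nonzero $\vec n$ with $c(\vec n)$ arbitrarily small; since $\mu(A\cap(A+t))\to\mu(A)$ as $t\to 0$ for sets $A$ of finite positive measure, it follows that $\theta_0\in\mathcal R_T$ (in fact $\mathcal R_T=\{\theta_0\}$, consistent with Theorem~\ref{Gdelta}). So the heavier rank one machinery is exactly what the paper needs to eliminate this inevitable irrational recurrent direction, and it also supplies the flexible framework behind Theorem~\ref{all}; for the statement of Theorem~\ref{dnorat} alone, your translation example suffices and is considerably simpler.
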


The question became moot from the point of view of the ratio ergodic theorem when Hochman \cite{Hochratio} gave an alternative proof that does not require the recurrence of any sub-actions.  When Rudolph saw the example constructed to prove Theorem~\ref{dnorat}, and others displaying a range of possibilities for the recurrence properties of particular sets of sub-actions, he suggested that we extend the definition of directional recurrence to include irrational directions and that we analyze the structure of possible recurrent directions in this more general setting.  The results on directional recurrence presented here were established jointly with Rudolph, but unfortunately the final exposition was completed after his untimely death in 2010.   The authors are grateful to have had this chance to collaborate with Rudolph who was both their friend and advisor.

For ease of exposition from here on out we restrict our attention to $d=2$ but we note that the proofs and examples generalize readily to all $d>1$.  

Denote the directions in $\mathbb R^2$ by angles in $[0,\pi)$.  There are two ways of defining directional properties for $\mathbb Z^2$ actions.       Milnor's approach is to define the property intrinsically in the $\mathbb Z^2$ action.  Namely, we say that an action $\{T^{\vec n}\}_{\vec n\in\mathbb Z^2}$ has a dynamic property in the direction $\theta$ if there exist a collection of arbitrarily good rational approximants $\vec n_i$ of $\theta$ so that the set  $\{T^{\vec n_i}\}$ has that property.  Alternatively, given a direction $\theta$, one can associate to $\{T^{\vec n}\}_{\vec n\in\mathbb Z^2}$ an $\mathbb R$ action in that direction by considering the unit suspension flow of $\{T^{\vec n}\}_{\vec n\in\mathbb Z^2}$, restricted to the direction $\theta$.  For directional entropy the two approaches yield equivalent definitions \cite{PIsrael}.  Here we define directional recurrence both intrinsically and via the unit suspension and we prove that the two definitions are equivalent.

\begin{theorem}\label{same}
An ergodic, infinite measure preserving $\mathbb Z^2$ action on a $\sigma$-finite Lebesgue space is recurrent in a direction $\theta$ if and only if its unit suspension restricted to the direction $\theta$ is recurrent as an $\mathbb R$ action.
\end{theorem}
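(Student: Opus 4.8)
The plan is to work with an explicit model of the unit suspension. I would realize it on $Y=X\times[0,1)^2$ with the product measure $\nu=\mu\times\lambda$ (here $\lambda$ is Lebesgue measure), where the suspension $\mathbb R^2$ action is $\Phi^{\vec u}(x,\vec s)=(T^{\lfloor \vec s+\vec u\rfloor}x,\,\{\vec s+\vec u\})$, with floor and fractional part taken coordinatewise, and the direction-$\theta$ flow is $\phi_t=\Phi^{t\vec v_\theta}$ for $\vec v_\theta=(\cos\theta,\sin\theta)$. Both notions of recurrence are determined by their behaviour on boxes $B=A\times Q$ with $A\subseteq X$, $0<\mu(A)<\infty$, and $Q\subseteq[0,1)^2$ a small cube, since such sets generate the relevant $\sigma$-algebras and both recurrence properties pass to and from a generating family of finite-measure sets. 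So throughout I would fix such a box and translate each recurrence statement into a statement about returns to $B$.

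First I would treat the implication that a recurrent flow forces intrinsic directional recurrence. If $\phi_t$ is recurrent then for a.e.\ $(x,\vec s)\in B$ there are times $t_j\to\infty$ with $\phi_{t_j}(x,\vec s)\in B$. Writing $\vec n_j=\lfloor\vec s+t_j\vec v_\theta\rfloor$, such a return means $T^{\vec n_j}x\in A$ and $\{\vec s+t_j\vec v_\theta\}\in Q$; since the fractional part is bounded, $|\vec n_j-t_j\vec v_\theta|=O(1)$, so $\vec n_j/|\vec n_j|\to\vec v_\theta$ and the $\vec n_j$ are good approximants of $\theta$ lying within bounded perpendicular distance of the line through the origin in direction $\theta$. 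A Fubini argument over the fibre $Q$ then promotes this from a.e.\ $(x,\vec s)\in B$ to a.e.\ $x\in A$, since the conclusion ``infinitely many $\vec n$ with $T^{\vec n}x\in A$'' depends only on $x$. This gives the intrinsic directional recurrence.

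The substantive direction is the converse, and here the main idea is to first establish a clean dichotomy. The suspension $\mathbb R^2$ action is ergodic because the base $\mathbb Z^2$ action is, and the perpendicular flow $\Phi^{s\vec v_\theta^\perp}$ is measure preserving and commutes with $\phi_t$; consequently the Hopf (conservative/dissipative) decomposition $Y=\mathcal C\sqcup\mathcal D$ of $\phi_t$ is invariant under all of $\mathbb R^2$, so by ergodicity one of $\mathcal C,\mathcal D$ is null. Thus $\phi_t$ is \emph{either} conservative \emph{or} completely dissipative, and it suffices to rule out complete dissipativity under the hypothesis of intrinsic recurrence. Arguing by contraposition, assume $\phi_t$ is completely dissipative. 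Then for the box $B$ the total sojourn time $\tau_B(x,\vec s)=\int_{\mathbb R}\chi_B(\phi_t(x,\vec s))\,dt$ is finite for a.e.\ $(x,\vec s)$. Integrating $\tau_B$ over the fibre coordinate and changing variables along the flow direction, one finds
\[
\int_{[0,1)^2}\tau_B(x,\vec s)\,d\vec s \;=\; \sum_{\vec n}\chi_A(T^{\vec n}x)\,c_{\vec n},
\]
where the weights $c_{\vec n}$ are comparable to $\lambda(Q)$ for the integer vectors $\vec n$ lying within bounded perpendicular distance of the line in direction $\theta$ and vanish otherwise. Hence finiteness of $\tau_B$ a.e., together with Tonelli, forces for a.e.\ $x\in A$ only finitely many such $\vec n$ with $T^{\vec n}x\in A$, so the intrinsic directional recurrence fails.

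I expect the fibre matching to be the crux. A flow return to $B$ requires not only that the base point return to $A$ (a discrete, intrinsic condition) but also that the fractional part return to $Q$, and conversely an intrinsic return $T^{\vec n}x\in A$ need not be witnessed by a given flow line. The device that overcomes this is exactly the last step: integrating out the fibre coordinate converts continuous sojourn time into a weighted count of genuine base returns, while the ergodicity-driven $0$--$1$ dichotomy is what allows an almost-everywhere statement for the flow to be transferred to an almost-everywhere statement for the base action. A secondary point requiring care is to match the precise neighbourhood of $\theta$ appearing in the intrinsic definition against the bounded-width tube around the line produced by the unit suspension; the bound on the fractional parts is what keeps these two descriptions compatible, since a tube of bounded width is eventually contained in every cone about $\theta$.
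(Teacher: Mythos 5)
Your architecture is genuinely different from the paper's: the paper constructs an explicit return vector for each product set and then passes from boxes to all sets via its sweeping out propositions, whereas you invoke the Hopf decomposition of the directional flow together with ergodicity of the ambient $\mathbb R^2$ action to reduce everything to ruling out complete dissipativity. That dichotomy is correct and attractive, but the proof as written has genuine gaps. In the implication ``flow recurrent $\Rightarrow$ directional recurrence'' you produce return vectors $\vec n_j$ only within bounded distance of the line (your bound uses just that $\vec s$ and the fractional part lie in $[0,1)^2$, giving distance at most $\sqrt2$), and your closing paragraph argues this suffices because ``a tube of bounded width is eventually contained in every cone about $\theta$.'' That proves a cone (angular) version of directional recurrence, which is not Definition~\ref{recurrentdirection}: there the $\epsilon$-tunnel of Definition~\ref{tunnel} is a tube of width $\epsilon$ about the line, and one needs, for \emph{every} $\epsilon>0$, a return vector within $\epsilon$ of the line; a tube of width $\sqrt2$ is never contained in a tube of width $\epsilon<\sqrt2$. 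The repair is to couple $Q$ to $\epsilon$: since $\vec n_j-t_j\vec v_\theta=\vec s-\{\vec s+t_j\vec v_\theta\}$ is a difference of two points of $Q$, a cube $Q$ of diameter less than $\epsilon$ forces $\vec n_j$ into the $\epsilon$-tunnel; this is exactly the role of the side-$\frac{\epsilon}{4}$ square in the paper's proof. With the cone justification you give, the step fails.

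The converse direction has two holes. First, complete dissipativity gives $\tau_B(x,\vec s)<\infty$ almost everywhere, but your displayed identity only yields the conclusion if $\int_{[0,1)^2}\tau_B(x,\vec s)\,d\vec s<\infty$ for a.e.\ $x$, and a.e.\ finiteness of $\tau_B$ does not imply finiteness of these fibre integrals; indeed $\int\tau_B\,d(\mu\times\lambda)=\int_{\mathbb R}(\mu\times\lambda)(B)\,dt=\infty$, so $\tau_B$ is never integrable. One must instead fix a single good $\vec s$, chosen so that every cube $\vec n+Q$ met by the flow line through $\vec s$ with $\vec n$ in a suitable sub-tunnel is crossed with chord length bounded below, and use finiteness of $\tau_B(x,\vec s)$ itself; note also that the weights $c_{\vec n}$ are \emph{not} bounded below by a multiple of $\lambda(Q)$ on the whole swept tube, only on such a sub-tunnel. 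Second, and more fundamentally, the final inference ``for a.e.\ $x\in A$ only finitely many such $\vec n$ with $T^{\vec n}x\in A$, so the intrinsic directional recurrence fails'' is a non sequitur: Definition~\ref{recurrentdirection} is a one-return property, and because the lattice points in a tunnel do not form a subgroup (a sum of two vectors in the $\epsilon_0$-tunnel need only lie in the $2\epsilon_0$-tunnel), ``finitely many returns a.e.\ in a fixed tunnel'' does not formally negate it. An additional argument is needed; for example, partition $A$ (mod null sets) according to the exact finite return set $F$ inside the $\epsilon_0$-tunnel, choose $F$ with $\mu(A_F)>0$, apply the assumed recurrence to $A_F$ with parameter $\epsilon'<\epsilon_0-\max_{\vec k\in F}\mathrm{dist}(\vec k,\mathbb R\vec v_\theta)$, and derive a contradiction from the return in $F$ of maximal projection onto $\vec v_\theta$ (the empty $F$ being handled directly). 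This non-closure of tunnels under addition is precisely the difficulty that the paper's uniform sweeping out machinery (Propositions~\ref{otherway} and~\ref{unifsweepgivesrecur}) is designed to handle, and your proposal passes over it. With these two repairs, and the shrunken $Q$, your Hopf-dichotomy route does go through, and it would give an alternative to the paper's reduction-to-boxes argument.
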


 We show that there are some restrictions on the types of subsets of $[0,\pi)$  that can appear as recurrent directions for a $\mathbb Z^2$ action.  Deferring the formal definition until later, let $\mathcal R_T\subset[0,\pi)$ denote the directions of recurrence for a $\mathbb Z^2$ action $\{T^{\vec n}\}_{\vec n\in\mathbb Z^2}$.

\begin{theorem}\label{Gdelta}  The set of recurrent directions, $\mathcal R_T$, of an 
infinite measure preserving, recurrent, and ergodic $\mathbb Z^2$ action on a $\sigma$-finite Lebesgue space, $(X,\mu,\{T^{\vec n}\}_{\vec n\in\mathbb Z^2})$ is a $G_\delta$ subset of $[0,\pi)$.
\end{theorem}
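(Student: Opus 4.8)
The plan is to pass to the unit suspension flow via Theorem~\ref{same} and to realize $\mathcal R_T$ as a countable intersection of open subsets of $[0,\pi)$. Write $\tilde X=(X\times\mathbb R^2)/\mathbb Z^2$ for the suspension, $\tilde\mu$ for its $\sigma$-finite measure, and $\phi^\theta_s[x,\vec t]=[x,\vec t+s(\cos\theta,\sin\theta)]$ for the flow in direction $\theta$; crucially the \emph{same} space and measure carry every $\phi^\theta$, only the direction varying, which is what makes a semicontinuity argument in $\theta$ possible. By Theorem~\ref{same}, $\theta\in\mathcal R_T$ exactly when $\phi^\theta$ is conservative. First I would fix a countable family of test sets $B_j=A_j\times Q_j\subset\tilde X$, with the $A_j$ ranging over a countable algebra of finite-measure sets generating the measure algebra of $X$ and the $Q_j$ over a countable basis of open subrectangles of $[0,1)^2$; a standard Hopf decomposition argument then shows that $\phi^\theta$ is conservative if and only if, for every $j$, $\tilde\mu$-a.e.\ point of $B_j$ returns to $B_j$ at arbitrarily large times.

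For each such $j$ and each $N\in\mathbb N$, set
\[
r_{j,N}(\theta)=\tilde\mu\bigl(\{\,y\in B_j:\ \phi^\theta_s y\in B_j\text{ for some }s\ge N\,\}\bigr).
\]
Intersecting over $N$ forces a.e.\ point to return infinitely often, so $\phi^\theta$ is conservative precisely when $r_{j,N}(\theta)=\tilde\mu(B_j)$ for all $j$ and all $N$. Hence
\[
\mathcal R_T=\bigcap_{j,N,k}\bigl\{\theta:\ r_{j,N}(\theta)>\tilde\mu(B_j)-\tfrac1k\bigr\},
\]
and it suffices to prove that each $\theta\mapsto r_{j,N}(\theta)$ is lower semicontinuous, for then every set in the intersection is open and $\mathcal R_T$ is $G_\delta$.

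The heart of the argument is this lower semicontinuity, and it is where the product form $B_j=A_j\times Q_j$ with $Q_j$ open pays off. Fix $\theta$ and a point $y=[x,\vec t]$ that returns at some time $s\ge N$ with the fractional coordinate of $\phi^\theta_s y$ interior to $Q_j$; this describes $\tilde\mu$-a.e.\ returning point, since $\partial Q_j$ is null. For $\tilde\mu$-a.e.\ such $y$ the segment $\{\,\vec t+u(\cos\theta,\sin\theta):0\le u\le s\,\}$ meets the integer grid only transversally and away from lattice corners, so for $\theta'$ sufficiently close to $\theta$ the flow line over $[0,s]$ crosses exactly the same grid lines in the same order and hence reaches the base point $T^{\vec m}x$ for the \emph{same} $\vec m$, its fractional coordinate varying continuously with $\theta'$. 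The base coordinate is therefore literally unchanged, so the base point still lies in $A_j$ while the fractional part still lies in $Q_j$ by openness; thus $y\in E_{j,N}(\theta')$ for all $\theta'$ near $\theta$, where $E_{j,N}(\theta)$ denotes the return set defining $r_{j,N}$. Consequently $\liminf_{\theta'\to\theta}\mathbf 1_{E_{j,N}(\theta')}\ge\mathbf 1_{E_{j,N}(\theta)}$ $\tilde\mu$-a.e., and since $\mathbf 1_{B_j}$ is integrable, Fatou's lemma yields $\liminf_{\theta'\to\theta}r_{j,N}(\theta')\ge r_{j,N}(\theta)$.

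The main obstacle I anticipate is the control of the grid-crossing combinatorics underlying this semicontinuity: since the generators $T^{\vec m}$ are only measurable, the suspension flow is not continuous on $\tilde X$, so I must restrict to the full-measure set of pairs $(y,\theta)$ whose finite-time orbit segments avoid lattice corners and tangencies, and check that the exceptional set is $\tilde\mu$-null for each fixed $N$. A secondary technical point is justifying the reduction to the countable family $\{B_j\}$, namely that conservativity of $\phi^\theta$ really is detected by return of a.e.\ point of each product $A_j\times Q_j$; this follows from the Hopf decomposition together with the fact that such products generate the measure algebra of $\tilde X$, but it must be stated carefully so that the return condition genuinely precludes a wandering set of positive measure.
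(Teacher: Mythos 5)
Your semicontinuity step is essentially sound, and in fact simpler than you make it: the suspension at time $s$ depends only on the endpoint displacement, since $\hat T^{s\vec u_{\theta'}}(x,\vec r)=(T^{\lfloor \vec r+s\vec u_{\theta'}\rfloor}x,\{\vec r+s\vec u_{\theta'}\})$, so no analysis of grid crossings along the path is needed; if moreover each $Q_j$ is chosen with closure inside the open unit cell, every return endpoint is uniformly off the lattice and the persistence of a return under perturbation of $\theta$ holds for \emph{every} returning point, with the same time $s$ and the same $\vec m$ --- no exceptional null sets, then Fatou as you say. The genuine gap is in what you call the ``secondary technical point.'' The implication ``(a.e.\ point of every $B_j$ returns at arbitrarily large times) $\Rightarrow$ conservative'' does \emph{not} follow from the Hopf decomposition together with density of the family; at that level of generality it is false. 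Take the dissipative translation flow $\phi_s(t,y)=(t+s,y)$ on $\mathbb R\times Y$, start from any countable dense family $\{B_j\}$ of finite-measure sets, and replace each $B_j$ by $B_j\cup\bigcup_n\bigl([a_{j,n},a_{j,n}+\ell_{j,n}]\times Y_n\bigr)$ with $Y_n\nearrow Y$, $a_{j,n}\to\infty$, and $\ell_{j,n}\nu(Y_n)\le 2^{-n}\epsilon_j$. The modified family is still dense, yet \emph{every} point of every member returns at arbitrarily large times, because every orbit keeps hitting the strips marching off to infinity. The underlying problem is visible in any attempted approximation argument: a point of $B_j$ lying in a wandering set $W$ can return late to $B_j$ by landing in the small set $B_j\setminus W$, and since returns range over unboundedly many times, the measure of points admitting \emph{some} late return cannot be bounded by $\tilde\mu(B_j\setminus W)$ without an additional disjointness input. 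So your claimed identity $\mathcal R_T=\bigcap_{j,N,k}\{\theta: r_{j,N}(\theta)>\tilde\mu(B_j)-\tfrac1k\}$ is unproved as it stands: the right-hand side could a priori be strictly larger.

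The gap is fixable, but only by using structure specific to your situation, and this is where the real work lies. One fix is to replace return times by occupation times: with $r'_{j,N}(\theta)=\tilde\mu\{y\in B_j:\int_0^\infty 1_{B_j}(\phi^\theta_s y)\,ds>N\}$, the dissipative part is detected by \emph{any} set of positive finite measure charging it, since $\int_W\int_{\mathbb R}1_B(\phi_s y)\,ds\,d\tilde\mu(y)\le\tilde\mu(B)<\infty$ by pairwise disjointness of $\{\phi_{s+n}W\}_{n\in\mathbb Z}$ for each fixed $s$; lower semicontinuity of $r'_{j,N}$ follows by the same openness-plus-Fatou argument applied to finite-horizon occupation times. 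Alternatively, keep return times but prove detection by hand: if $\theta\notin\mathcal R_T$, take a witness set $A^*$ and $\epsilon^*>0$ with $\mu(A^*\cap T^{\vec n}A^*)=0$ for all nonzero $\vec n$ in the $\epsilon^*$-tunnel of $\theta$; choosing $Q_j$ of diameter less than $\epsilon^*/2$ forces the lattice displacement $\vec m$ of any return of a point of $A^*\times Q_j$ into the $\epsilon^*/2$-tunnel, whence the sets $T^{\vec m}A^*$ over such $\vec m$ are pairwise disjoint mod $\mu$, and for $A_j$ with $\mu(A_j\triangle A^*)<\delta\mu(A^*)$ the points of $(A_j\cap A^*)\times Q_j$ admitting any late return to $A_j\times Q_j$ have measure at most $\mu(A_j\setminus A^*)\lambda(Q_j)$ --- the disjointness collapses the infinite union to a single term. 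Note that this second repair is essentially the mechanism of the paper in disguise. The paper itself never leaves $\mathbb Z^2$: by Propositions~\ref{otherway} and~\ref{unifsweepgivesrecur}, recurrence of $\theta$ is equivalent to the sweeping-out property of Definition~\ref{unifsweep} for a countable dense family of sets, and that property is witnessed by \emph{finitely many} lattice vectors lying in the open $\epsilon$-tunnel of $\theta$; since finitely many such vectors remain in the $\epsilon$-tunnel of all nearby directions, openness of each set in the countable intersection is immediate, with no flow, no Fatou, and no wandering-set analysis. Your route, once repaired, does prove Theorem~\ref{Gdelta}, but the repair --- not the semicontinuity --- is the heart of the matter.
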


There are directional properties with stronger restrictions on the structure of the bad set of directions.  For example an ergodic and probability measure preserving $\mathbb Z^2$ action can have at most countably many directions along which it is not ergodic.  A similar statement holds for directional weak mixing for a weak mixing $\mathbb Z^2$ action (see for example \cite{RRS}).  
Here we show that there is a great deal more flexibility in the structure of $\mathcal R_T$.  In particular,  the original example proving Theorem~\ref{dnorat} has no recurrent directions even under the extended definition of directional recurrence.   

\begin{theorem}\label{none}
There exists an infinite measure preserving, recurrent, and ergodic $\mathbb Z^2$ action  on a $\sigma$-finite Lebesgue space, $(X,\mu,\{T^{\vec n}\}_{\vec n\in\mathbb Z^2})$, with the property that $\mathcal R_T=\emptyset$.
\end{theorem}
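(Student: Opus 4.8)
The plan is to take the very action that establishes Theorem~\ref{dnorat} and to verify that its set of recurrent directions is empty. The first point to stress is that Theorem~\ref{dnorat} alone does not suffice: recurrence of the action in a direction $\theta$ is witnessed by a sequence of arbitrarily good rational approximants $\vec n_i$ of $\theta$, and these approximants need not be integer multiples of a single vector. Thus, even though Theorem~\ref{dnorat} tells us that every individual one-parameter sub-action $\{T^{k\vec n}\}_k$ is dissipative, a sequence $\{T^{\vec n_i}\}$ whose directions merely converge to $\theta$ could in principle still be recurrent, and for irrational $\theta$ this is the only kind of approximation available. The content of the theorem is precisely to rule this out in every direction at once.

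My approach is to pass to the suspension. By Theorem~\ref{same}, $\theta\in\mathcal R_T$ if and only if the unit suspension flow $F^t_\theta$ in the direction $\theta$ is recurrent, so it suffices to show that $F^t_\theta$ is dissipative for every $\theta\in[0,\pi)$. Flowing a point $(x,\vec v)$ of the suspension for time $t$ applies the transformation $T^{\vec N(t)}$, where the integer vector $\vec N(t)$ tracks $t\,(\cos\theta,\sin\theta)$ and hence remains within a bounded distance of the line $\mathbb R(\cos\theta,\sin\theta)$; in other words, the relevant return vectors to a base set $A\times[0,1)^2$ are exactly the $\vec n$ lying in a \emph{strip} about the $\theta$-line with $T^{\vec n}x\in A$. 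The same strip description is what ``arbitrarily good approximants'' produce intrinsically, since best rational approximants of a slope lie within bounded perpendicular distance of the corresponding line.

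The heart of the argument is then a single estimate, to be read off from the construction underlying Theorem~\ref{dnorat}: there is a base set $A$ of positive measure for which the return overlaps $\mu(A\cap T^{-\vec n}A)$ decay fast enough that, for each fixed strip about each line, the sum over $\vec n$ in that strip is finite. Granting such a bound, Borel--Cantelli shows that for a.e.\ $x\in A$ only finitely many return vectors $\vec n$ in the strip satisfy $T^{\vec n}x\in A$; hence a.e.\ point of $A\times[0,1)^2$ eventually leaves it forever under $F^t_\theta$, the flow is dissipative, and $\theta\notin\mathcal R_T$. The conceptual point that makes this possible without contradicting the assumed recurrence of the full action is that the total return, summed over all of $\mathbb Z^2$, must diverge (conservativity), while every individual strip captures only a convergent portion of it --- for instance a radial decay of order $|\vec n|^{-2}$ already has this feature, since \[ \sum_{\vec n\in\mathbb Z^2}|\vec n|^{-2}=\infty \quad\text{yet}\quad \sum_{\vec n\ \text{in a fixed strip}}|\vec n|^{-2}<\infty. \]

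The main obstacle is establishing the strip-summable overlap estimate uniformly in the direction, which is where the specific combinatorics of the Theorem~\ref{dnorat} construction must be used; the soft reduction above and the Borel--Cantelli step are routine once that estimate is in hand. Two secondary points require care: the bound must hold for strips of every width (so as to defeat approximating sequences that stay only boundedly close to the line rather than converging to it), and, since we treat one $\theta$ at a time and the conclusion $\theta\notin\mathcal R_T$ is itself an ``exists $A$, almost every $x$'' statement, no uncountable intersection of full-measure sets is needed. Carrying out the estimate for every $\theta$ yields $\mathcal R_T=\emptyset$.
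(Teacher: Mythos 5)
Your proposal contains a genuine gap: the entire mathematical content is deferred to an estimate that you never prove, for a construction that you never give. You propose to ``read off from the construction underlying Theorem~\ref{dnorat}'' a base set $A$ whose return overlaps $\mu(A\cap T^{-\vec n}A)$ are summable over every strip about every line, and you yourself flag this as ``the main obstacle.'' But in the paper there is no separate pre-existing construction one could consult: the example proving Theorem~\ref{dnorat} and the example proving Theorem~\ref{none} are one and the same, and its defining combinatorial properties are exactly what must be designed so that directional recurrence fails. A proof of an existence theorem must exhibit the object; reducing ``there exists an action with $\mathcal R_T=\emptyset$'' to ``the action of Theorem~\ref{dnorat} satisfies a strip-summability property'' replaces the theorem by an unverified (and unverifiable, without the construction in hand) assumption of essentially the same strength. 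The soft parts of your argument --- that Theorem~\ref{dnorat} alone is insufficient, the passage to the suspension via Theorem~\ref{same}, and the Borel--Cantelli/Hopf step turning summable strip-returns into dissipativity of the directional flow --- are fine, but they are the routine part.

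It is also worth noting that the paper's actual mechanism is both more elementary and stronger than a decay estimate. The example is a rank-one cutting-and-stacking action in which, by Lemma~\ref{wholetower}, a tower level $I$ can satisfy $\mu(T^{\vec u}I\cap I)>0$ only when $\vec u$ is a \emph{time of strong recurrence}, and the inductive placement of the two slices of $\tau_i$ (conditions \eqref{grow}--\eqref{nointer}) forces every newly created recurrence time to have slope less than half the smallest previous slope while avoiding the $\vec w$-tunnels of all previously created recurrence times $\vec w$. Consequently, for each direction $\theta$ one finds a spacer level $s$ and an $\epsilon>0$ such that \emph{no} nonzero vector in the $\epsilon$-tunnel of $\theta$ is a recurrence time for $s$ at all --- i.e.\ $\mu(T^{\vec w}s\cap s)=0$ identically on the tunnel --- so no summability, Borel--Cantelli, or suspension argument is needed; the paper works directly with Definition~\ref{recurrentdirection}. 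If you wanted to complete your route, you would still have to carry out an inductive construction of precisely this kind to make your overlap estimate true, at which point the quantitative detour becomes superfluous.
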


The example proving Theorems~\ref{dnorat} and~\ref{none} is a rank one action that is constructed using a cutting and stacking procedure.   The construction has enough flexibility built into it so that many specific examples can be obtained by choosing the parameters of the procedure appropriately.  Note that by Theorem~\ref{Gdelta} we know that $\mathcal R_T$ cannot consist only of the rational directions in $[0,\pi)$.  Using our technique we construct an explicit example that shows that it is possible for $\mathcal R_T$ to contain all the rational directions, but still not be all of $[0,\pi)$.  
  
\begin{theorem}\label{all}
Let $\alpha_1,\ldots,\alpha_k\in[0,\pi)$ be irrational.  There exists an infinite measure preserving, recurrent, and ergodic $\mathbb Z^2$ action on a $\sigma$-finite Lebesgue space, $(X,\mu,\{T^{\vec n}\}_{\vec n\in\mathbb Z^2})$,  with the property that $\mathcal R_T$ contains all rational directions but $\alpha_i\notin\mathcal R_T$ for $i=1,\ldots,k$.
\end{theorem}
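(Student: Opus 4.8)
The plan is to reuse the flexible rank-one cutting-and-stacking construction that underlies Theorems~\ref{dnorat} and~\ref{none}, modifying only the spacer data so as to switch recurrence \emph{on} along every rational direction while keeping it \emph{off} along each $\alpha_i$. Recall that such an action is specified by a sequence of cut numbers together with, at each stage $n$, a finite pattern of spacers recording how the stage-$n$ tower is re-stacked inside the stage-$(n+1)$ tower. The essential observation is that directional recurrence is governed by a displacement cocycle: applying $T^{\vec n}$ to a point moves it through the tower and, each time the orbit wraps through a stacking, accumulates a $\mathbb{Z}^2$-valued spacer displacement. In the unit suspension this cocycle records the position of the flow line in the plane, so by Theorem~\ref{same} I may check non-recurrence in direction $\alpha_i$ directly on the suspension flow, by showing that flowing in direction $\alpha_i$ carries almost every point off to infinity.

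First I would arrange recurrence in every rational direction. A rational direction is the direction of a primitive vector $\vec p$, and its natural approximants are the multiples $k\vec p$; by the intrinsic definition, recurrence there amounts to conservativity of the sub-action $T^{\vec p}$. I would build the spacer patterns to be \emph{balanced}, meaning that along any fixed rational direction the accumulated spacer displacement has no net drift and returns to a bounded neighborhood of the origin infinitely often. Concretely this is a diagonalization: at stage $n$ the spacers are chosen so that $T^{\vec p}$ is forced to return for every primitive $\vec p$ with $\|\vec p\|\le N_n$, where $N_n\to\infty$, so that every primitive vector is eventually handled and $T^{\vec p}$ is conservative for all $\vec p$.

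Next I would destroy recurrence along each of the finitely many $\alpha_i$. Using the irrationality of $\alpha_i$, I would superimpose on the balanced pattern an additional family of spacer displacements whose cumulative effect, when the flow is run in direction $\alpha_i$, is a definite monotone drift away from any fixed level, while the component of this drift along every fixed rational direction cancels as above. Because there are only $k$ directions to treat, I can apportion a fixed drift budget among the $\alpha_i$ at each stage and interleave the patterns, guaranteeing each $\alpha_i$ an unbounded drift. One then checks that \emph{every} sequence of rational approximants to $\alpha_i$ inherits this drift and hence fails to be recurrent, so that $\alpha_i\notin\mathcal R_T$, whereas no rational direction is affected.

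The main obstacle is precisely the compatibility of these two requirements: the rationals are dense and the $\alpha_i$ are limits of rationals, so the construction must make recurrence hold on a dense set yet fail at finitely many of its limit points. This is consistent with Theorem~\ref{Gdelta} only because $\mathcal R_T$ need not be closed, and the delicate point is a telescoping estimate showing that the balancing spacers (which kill drift in rational directions) do not accidentally kill the drift at $\alpha_i$, while the $\alpha_i$-drift spacers do not revive drift in any rational direction up to the current norm bound. Once this is in place, the global properties---infinite invariant measure, conservativity (hence recurrence) of the full action, and ergodicity---follow exactly as in the constructions proving Theorems~\ref{dnorat} and~\ref{none}, and the resulting $\mathcal R_T$ contains all rational directions while omitting $\alpha_1,\dots,\alpha_k$, as required.
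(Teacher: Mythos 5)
Your overall skeleton (a rank-one cutting-and-stacking construction, diagonalizing over rational data, treating the finitely many $\alpha_i$ specially) matches the paper's, but there is a genuine gap at exactly the point you yourself flag as ``the delicate point'': you never supply the mechanism that reconciles recurrence along a dense set of rational directions with non-recurrence at the $\alpha_i$. The two-part scheme you describe (``balanced'' spacers to force conservativity of every $T^{\vec p}$, plus a superimposed family of ``drift'' spacers aimed at each $\alpha_i$) is a restatement of the goal, not a proof. In a rank-one construction, Lemma~\ref{wholetower} reduces everything to a combinatorial statement about times of strong recurrence: what must be shown is that beyond some stage no time of strong recurrence lies in a fixed $\epsilon$-tunnel of $\alpha_i$, while multiples of every primitive $\vec p$ keep occurring as times of strong recurrence. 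Your displacement-cocycle/drift picture never connects to this criterion: you do not say which lattice vectors your superimposed spacers make into return times, why those spacers do not destroy the rational recurrence arranged in the first step, or why the balancing spacers do not reinstate returns inside the $\alpha_i$-tunnels. Note that the $\epsilon$-tunnel of an irrational direction contains infinitely many lattice points, every one of which lies on some rational line, so ``returns along rational lines'' and ``returns inside the $\alpha_i$-tunnel'' are not disjoint phenomena that can be delegated to two independent families of spacers; an unproven ``telescoping estimate'' is precisely where the theorem lives.

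The paper resolves this tension with a single quantitative geometric observation, absent from your proposal, which makes any second family of spacers unnecessary: a lattice vector $\vec v$ lies in the $\epsilon$-tunnel of a direction $\alpha$ only if the angle between $\alpha$ and the direction of $\vec v$ is at most $\arcsin\left(\epsilon/\Vert\vec v\Vert\right)$, a window that shrinks as $\Vert\vec v\Vert$ grows. Accordingly, the paper enumerates $\mathbb Z^2$ as $\{\vec u_i\}$ and at stage $i$ places the two slices at $\{\vec 0,\, t_i\vec u_i\}$ with $t_i$ large. Since every rational direction arises as the direction of $\vec u_i$ for infinitely many $i$, Proposition~\ref{wellapprox} and the two-slice argument give recurrence along every rational direction, with return vectors lying exactly on that rational line. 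Simultaneously, fixing $\epsilon_i\downarrow 0$ and choosing $t_i$ so large that the interval of directions of half-width $\arcsin\left(\epsilon_i/\Vert t_i\vec u_i\Vert\right)$ about the direction of $\vec u_i$ misses all of $\alpha_1,\ldots,\alpha_k$ (possible because the $\alpha_j$ are irrational, hence never equal to the rational center of the interval), the very same vectors that create rational recurrence are harmless for the $\alpha_j$: past a suitable stage no time of strong recurrence lies in the $\epsilon$-tunnel of any $\alpha_j$, and Lemma~\ref{wholetower} applied to a level of that stage's tower yields $\alpha_j\notin\mathcal R_T$. (Your detour through the suspension flow and Theorem~\ref{same} is also unnecessary; the paper argues intrinsically.) Until your proposal contains this estimate, or some other concrete reason why your two requirements coexist, it does not prove the theorem.
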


It is, however, an open question as to whether any $G_\delta$ subset of $[0,\pi)$ can be achieved as a set of recurrence.
 
The organization of the paper is as follows.  In Section~\ref{definitions}  we define directional recurrence intrinsically and via the unit suspension flow and prove that the two definitions are equivalent.   We also show that for rational directions the definition coincides with the usual definition of recurrence for the $\mathbb Z$ action obtained by restricting to the corresponding subgroup of $\mathbb Z^2$.  In Section~\ref{structure} we give a third characterization of recurrence in terms of a {\it sweeping out property} and we prove Theorem~\ref{Gdelta}. In Section~\ref{exs}  we establish the notation we use to construct the cutting and stacking examples,  give some basic properties of infinite measure preserving rank one actions and  the proofs of Theorems~\ref{none} and \ref{all}.

\section{Directional recurrence}\label{definitions}
In what follows we let $X$ denote a $\sigma$-finite Lebesgue space and $(X,\mu,\{T^{\vec n}\}_{\vec n\in\mathbb Z^2})$ denote an infinite measure preserving, and ergodic $\mathbb Z^2$ action on $X$.   Our goal in this section is to extend the notion of recurrence to include directions that do not correspond to a subgroup action of $\{T^{\vec n}\}_{\vec n\in\mathbb Z^2}$.  We begin by establishing some notation to describe directions and vectors associated to a direction.  We then define directional recurrence intrinsically and via the unit suspension.  Finally, we prove that the definitions are equivalent.

\subsection{Notation}
Recall that a direction is an angle $\theta\in[0,\pi)$.  For ease of exposition we introduce some terminology to help associate directions to vectors in the plane.  

\begin{notation}
Let $\vec u=(u_1,u_2)\in\mathbb R^2$.   If $u_2\neq 0$ then the {\it direction associated to $\vec u$} is the angle it makes with the vector $\text{\em sign}(u_2)\cdot\displaystyle{(1,0)}$.  If $u_2=0$, then the direction associated to $\vec u$ is $0$.
\end{notation}

\begin{notation}
Given $\theta\in[0,\pi)$, a vector $\vec v=\vec v(\theta)\in\mathbb R^2$  is said to be {\em associated to the direction $\theta$} if the direction associated to $\vec v$ is $\theta$.
\end{notation}

\begin{notation}
A direction $\theta\in[0,\pi)$ is rational if it has an associated vector $\vec n\in\mathbb Z^2$.
\end{notation}  

The concept of a tunnel is the key geometric tool that we will use to define directional recurrence.

\begin{definition}\label{tunnel} Let $\theta\in[0,\pi)$ and $\epsilon>0$.  The $\epsilon$-tunnel of  $\theta$  is the set of points in $\mathbb R^2$ within $\epsilon$ of the line through the origin in direction $\theta$, namely the set
      \begin{equation*}
      \bigcup_{t\in\mathbb R}B_{\epsilon}(t\vec u)
      \end{equation*}
where $B_{\epsilon}(\vec x)$ denotes the $\epsilon$-ball around the point $\vec x\in\mathbb R^2$, and $\vec u\in\mathbb R^2$ is any vector associated to $\theta$.
 \end{definition}

Finally, we introduce some notation which we will use in working with unit suspension flows.
\begin{notation}  Recall $\lfloor x \rfloor$ is the greatest integer $n\le x$.  For a vector $\vec{v} = (v_1, v_2) \, \in\mathbb{R}^2$, we set $\lfloor \vec{v} \rfloor=(\lfloor v_1 \rfloor, \lfloor v_2 \rfloor ) \in \mathbb{Z}^2$.  For a set $R\subset \mathbb{R}^2$, $\lfloor R \rfloor=\{\lfloor \vec{v} \rfloor: \vec{v}\in R\}$.

Recall also that $\{x\}$ is the fractional part of the real number $x$, i.e. $\{x\} = x - \lfloor x \rfloor$.  We define $\{ \vec{v} \} = ( \{v_1\}, \{v_2 \})$ and $\{ R\}=\{\{\vec v\}:\vec v\in R\}$.
\end{notation}

\subsection{Defining directional recurrence by rational approximants}

We begin by recalling the definition of a recurrent group action.  For an extensive treatment of the properties of recurrent actions see for example \cite{A}.

\begin{definition}\label{usualrecurrencedef} An ergodic, infinite measure preserving action of a group $G$ on a $\sigma$-finite Lebesgue space,
  $(X,\mu,\{T^{\gamma}\}_{\gamma\in G})$, is recurrent if for every measurable set $A$ with $\mu(A)>0$, there exists $\gamma\in G$ such that $\mu(A\cap T^{\gamma} A)>0$.
\end{definition}

We are now ready to define directional recurrence for actions of the group $\mathbb Z^2$.

\begin{definition}\label{recurrentdirection}
A direction $\theta$ is recurrent for an infinite measure preserving, ergodic, and recurrent $\mathbb Z^2$ action on a $\sigma$-finite Lebesgue space, $(X,\mu,\{T^{\vec n}\}_{\vec n\in \mathbb Z^2})$, if for all 
    $\epsilon > 0$ and all measurable sets $A$ of positive measure 
    there is a vector $\vec{n}$ in the $\epsilon$-tunnel of 
    $\theta$ so that $\mu (A \cap T^{\vec{n}}A ) > 0$.

    \end{definition}

\begin{notation}
We denote the set of recurrent directions for $(X,\mu,\{T^{\vec n}\}_{\vec n\in \mathbb Z^2})$ by $\mathcal R_T$.  
\end{notation}

The next result shows that recurrence along a rational direction as given by Definition~\ref{recurrentdirection} is equivalent to the recurrence of the group elements corresponding to that direction as given by Definition~\ref{usualrecurrencedef}.

\begin{lemma} 
Let $(X,\mu,\{T^{\vec n}\}_{\vec n\in \mathbb Z^2})$ be an infinite measure preserving $\mathbb Z^d$ action on a $\sigma$-finite Lebesgue space.
A rational direction $\theta\in[0,\pi)\in\mathcal R_T$ if and only if $(X,\mu,\{T^{k\vec n}\}_{k\in\mathbb Z})$ is a recurrent $\mathbb Z$ action for all $\vec n\in\mathbb Z^2$ whose associated direction is $\theta$.
    \end{lemma}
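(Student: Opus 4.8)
The plan is to reduce everything to a statement about powers of a single conservative transformation. Write $\theta$ as the direction of its primitive integer vector $\vec n_0=(p,q)\in\mathbb Z^2$, so that the integer vectors associated to $\theta$ are exactly the nonzero multiples $\{m\vec n_0:m\in\mathbb Z\setminus\{0\}\}$, all lying on the central line $L$ of the tunnel. The geometric fact I would use repeatedly is that $L$ captures \emph{all} nearby lattice points once $\epsilon$ is small: an integer point $(a,b)\notin L$ satisfies $aq-bp\in\mathbb Z\setminus\{0\}$, so its distance $|aq-bp|/\sqrt{p^2+q^2}$ to $L$ is at least $1/\sqrt{p^2+q^2}$. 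Hence for every $\epsilon<1/\sqrt{p^2+q^2}$ the only lattice points in the $\epsilon$-tunnel of $\theta$ are the multiples $j\vec n_0$, $j\in\mathbb Z$.

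For the direction in which each $\{T^{k\vec n}\}$ is assumed recurrent, only the primitive vector is needed. Given $\epsilon>0$ and $A$ with $\mu(A)>0$, recurrence of $\{T^{k\vec n_0}\}$ yields some $j\neq 0$ with $\mu(A\cap T^{j\vec n_0}A)>0$; since $j\vec n_0\in L$ lies in the $\epsilon$-tunnel for every $\epsilon$, Definition~\ref{recurrentdirection} is satisfied and $\theta\in\mathcal R_T$.

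For the converse I would first extract recurrence of the primitive action. Fix $A$ with $\mu(A)>0$ and apply Definition~\ref{recurrentdirection} with some $\epsilon<1/\sqrt{p^2+q^2}$: the resulting nonzero vector in the tunnel must, by the geometric fact, be of the form $j\vec n_0$ with $j\neq 0$, so $\mu(A\cap T^{j\vec n_0}A)>0$. As $A$ was arbitrary, the $\mathbb Z$ action $\{T^{k\vec n_0}\}$ is recurrent; equivalently $S:=T^{\vec n_0}$ is conservative. This reduces the lemma to the purely one-dimensional assertion that conservativity of the invertible transformation $S$ forces conservativity of every power $S^m$, that is, recurrence of $\{T^{km\vec n_0}\}$ for all $m\neq 0$.

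The heart of the argument, and the step I expect to be the main obstacle, is this power statement. I would argue by contradiction: if $S^m$ is not conservative, then using $\sigma$-finiteness there is an $S^m$-wandering set $W$ with $0<\mu(W)<\infty$, so the sets $S^{mk}W$, $k\in\mathbb Z$, are pairwise disjoint. Conservativity of $S$ guarantees that the first-return map $S_W$ and the return time $r$ are defined almost everywhere on $W$, so I can form the skew product $\hat S(x,a)=(S_W x,\,a+r(x)\bmod m)$ on $W\times(\mathbb Z/m\mathbb Z)$. This space has finite measure, so $\hat S$ is conservative by Poincar\'e recurrence; applied to $W\times\{0\}$ it produces a point $x$ and a $j\geq 1$ with $\sum_{i=0}^{j-1}r(S_W^i x)\equiv 0\pmod m$. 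But that sum is precisely the $S$-time of the $j$-th return of $x$ to $W$, so $x$ returns to $W$ under a nonzero power of $S^m$, contradicting that $W$ is $S^m$-wandering. This contradiction gives conservativity of $S^m$ and completes the proof; the same conclusion can be found in Aaronson \cite{A}.
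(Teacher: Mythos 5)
Your proposal is correct, and its geometric core---shrinking $\epsilon$ below $1/\sqrt{p^2+q^2}$ so that the only lattice points in the $\epsilon$-tunnel of $\theta$ are the integer multiples of the primitive vector $\vec n_0$---is exactly the paper's argument, as is your ``immediate'' direction from recurrence of the sub-action to Definition~\ref{recurrentdirection}. Where you genuinely diverge is in what happens after that geometric reduction. The paper asserts that the vector produced by Definition~\ref{recurrentdirection} ``must necessarily be of the form $k\vec n$'' and concludes at once that $T^{\vec n}$ satisfies Definition~\ref{usualrecurrencedef}; as you noticed, that assertion is only literally true when $\vec n$ is primitive. Since the lemma demands recurrence of $\{T^{k\vec n}\}_{k\in\mathbb Z}$ for \emph{every} integer vector $\vec n$ associated to $\theta$, including $\vec n=m\vec n_0$ with $|m|\ge 2$, one must still pass from conservativity of $S=T^{\vec n_0}$ to conservativity of its powers $S^m$, and this is precisely the step you supply, with a correct proof: take a finite-measure $S^m$-wandering set $W$ (using $\sigma$-finiteness), induce on $W$ (legitimate because $S$ is conservative, so the return map and return time exist a.e.\ and preserve $\mu|_W$), and apply Poincar\'e recurrence to the finite measure preserving skew product over $\mathbb Z/m\mathbb Z$ that records return times mod $m$; a return to $W\times\{0\}$ is a return of a point of $W$ to $W$ under a nonzero power of $S^m$, contradicting wandering. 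This fact is indeed available in Aaronson \cite{A}, so citing it would also suffice. In short, your proof buys completeness: it covers the non-primitive vectors that the paper's two-line argument glosses over, at the cost of one extra standard lemma, whereas the paper's version, read literally, only establishes recurrence of the primitive sub-action.
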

\begin{proof}
Fix $(X,\mu,\{T^{\vec n}\}_{\vec n\in\mathbb Z^2})$ as in the statement of the lemma and suppose there is a rational $\theta\in\mathcal R_T$.  Let $\vec n\in\mathbb Z^2$ be a vector with associated direction $\theta$ and choose $\epsilon>0$ that is smaller than the distance between the line $\{t\vec n\}_{t\in\mathbb R}$ and those elements of $\mathbb Z^2$ that do not lie on the line $\{t\vec n\}_{t\in\mathbb R}$.  For any set $A$ of positive measure the vector $\vec w\in\mathbb Z^2$ satisfying Definition~\ref{recurrentdirection} for $\theta$ with this $\epsilon$ must necessarily be of the form $k\vec n$, showing that $T^{\vec n}$ satisfies Definition~\ref{usualrecurrencedef}.  The converse is immediate.
\end{proof}

\subsection{Directional recurrence via the unit suspension}  We begin by recalling the definition of the unit suspension of the $\mathbb Z^2$ action $(X,\mu,\{T^{\vec v}\}_{\vec v\in\mathbb Z^2})$.

\begin{definition}  The unit suspension of  a $\mathbb{Z}^2$ action on a $\sigma$-finite measure space, $(X,\mu,\{T^{\vec n}\}_{\vec n\in\mathbb Z^2})$, is the $\mathbb{R}^2$-action $\hat{T}$ defined on $X\times [0,1)^2$ by
$$\hat{T}^{\vec{v}}(x,\vec{r}) = (\, T^{(\lfloor \vec{v}+\vec{r} \rfloor} x, \{ \vec{v}+\vec{r} \} \,). $$
Note that $\hat T$ preserves the product measure $\mu\times\lambda$ where $\lambda$ is Lebesgue measure on $[0,1)^2$.
\end{definition}

We now prove Theorem~\ref{same}, restated more precisely below.
\begin{theorem}
Let $(X,\mu,\{T^{\vec n}\}_{\vec n\in\mathbb Z^2})$ be an infinite measure preserving $\mathbb Z^2$ action on a $\sigma$-finite Lebesgue space and let $(X\times[0,1)^2,\mu\times\lambda,\{\hat T^{\vec v}\}_{\vec v\in\mathbb R^2})$ denote its unit suspension.  
 The direction $\theta\in\mathcal R_T$ 
 if and only if the $\mathbb R$ action $(X \times[0,1)^2,\mu\times\lambda,\{\hat T^{t\vec u}\}_{t\in\mathbb R})$ is recurrent for any $\vec u$ associated to the direction $\theta$.
\end{theorem}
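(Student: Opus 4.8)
The plan is to set up a single geometric dictionary between the return times of the suspension flow and the integer vectors appearing in Definition~\ref{recurrentdirection}, and then to run it in both directions. Fix $\vec u$ associated to $\theta$ and write $\ell_\theta$ for the line through the origin in direction $\theta$. The central observation is that for any $\vec r\in[0,1)^2$ and $t\in\mathbb R$, the integer vector the flow produces, $\vec n:=\lfloor t\vec u+\vec r\rfloor$, satisfies $\vec n=t\vec u+(\vec r-\{t\vec u+\vec r\})$, so $|\vec n-t\vec u|\le\operatorname{diam}[0,1)^2$; more usefully, if both $\vec r$ and $\{t\vec u+\vec r\}$ lie in a common box $C$ of diameter less than $\epsilon$, then $|\vec n-t\vec u|<\epsilon$ and hence $\vec n$ lies in the $\epsilon$-tunnel of $\theta$. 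The same computation run backwards shows that once $\vec n$ is within $\epsilon$ of $\ell_\theta$, say $|\vec n-s\vec u|\le\epsilon$, then for every $\vec r$ in the shrunken box $C\cap(C+(\vec n-s\vec u))$ one has $\lfloor s\vec u+\vec r\rfloor=\vec n$ and $\{s\vec u+\vec r\}\in C$. This dictionary converts flow returns into tunnel vectors and conversely.

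For the direction ``flow recurrent $\Rightarrow\theta\in\mathcal R_T$'' I would argue as follows. Given $A$ with $\mu(A)>0$ and $\epsilon>0$, fix a box $C\subset[0,1)^2$ of positive measure and diameter less than $\epsilon$ and set $B=A\times C$. Recurrence of the flow yields $t\neq0$ with $(\mu\times\lambda)(B\cap\hat T^{t\vec u}B)>0$; unwinding $\hat T$, a positive $\mu$-measure set of $x\in A$ satisfies $T^{\vec n}x\in A$ with $\vec n=\lfloor t\vec u+\vec r\rfloor$ for the associated $\vec r\in C$ having $\{t\vec u+\vec r\}\in C$. By the geometric observation $\vec n$ lies in the $\epsilon$-tunnel and $\mu(A\cap T^{-\vec n}A)>0$, which is exactly Definition~\ref{recurrentdirection}. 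The only care needed is that $\vec n\neq\vec 0$: since $\vec n=\vec 0$ forces $t\vec u+\vec r\in[0,1)^2$ and hence $|t|$ bounded, I would invoke Poincar\'e recurrence for the conservative flow to select a return time $|t|$ large enough to guarantee $\vec n\neq\vec 0$.

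For the converse, assume $\theta\in\mathcal R_T$ and let $B$ have positive measure; I must produce $t\neq0$ with $(\mu\times\lambda)(B\cap\hat T^{t\vec u}B)>0$. First choose a box $C$ of small diameter $\delta$ and a set $A$ with $\mu(A)>0$ on which $B$ has density greater than $1-\eta$. Applying intrinsic recurrence to $A$ with $\epsilon\ll\delta$ gives a nonzero $\vec n$ in the $\epsilon$-tunnel and a positive-measure set $A_0=A\cap T^{-\vec n}A$. Choosing $s$ with $|\vec n-s\vec u|\le\epsilon$, the dictionary shows that for $\vec r$ in $C'=C\cap(C+(\vec n-s\vec u))$ one has $\hat T^{s\vec u}(x,\vec r)=(T^{\vec n}x,\{s\vec u+\vec r\})\in A\times C$ whenever $x\in A_0$; thus $\hat T^{s\vec u}$ maps the positive-measure set $A_0\times C'$ into $A\times C$ while preserving measure. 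Since $B$ fills all but an $\eta$-fraction of $A\times C$, a measure-preservation count then shows $B\cap(A_0\times C')\cap\hat T^{-s\vec u}B$ has positive measure once $\eta$ is small enough, producing the desired flow return.

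The main obstacle is precisely the bookkeeping in this last step. Intrinsic recurrence only guarantees $\mu(A_0)>0$ with no lower bound on the ratio $\mu(A_0)/\mu(A)$, whereas the density estimate needs $\eta$ smaller than a multiple of this a priori unknown ratio, so the quantifiers as stated are circular. To break the circularity I would instead pass to the Hopf decomposition of the flow into its conservative and dissipative parts: recurrence of the flow is equivalent to the dissipative part being null, and a dissipative part of positive measure would contain a wandering set, inside which a product set $A\times C$ can be chosen with $B$-density arbitrarily close to $1$ and on which no tunnel vector can return $A$ to itself, contradicting $\theta\in\mathcal R_T$. Carrying out this reduction cleanly, and verifying that the wandering of the flow genuinely obstructs intrinsic recurrence of the base set $A$, is where the real work lies; the geometric dictionary and the two direct estimates above are routine by comparison.
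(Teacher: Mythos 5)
Your ``flow recurrent $\Rightarrow\theta\in\mathcal R_T$'' direction is sound and is essentially the paper's own argument for that implication (small box $C$, unwind $\hat T$, observe that $\vec r$ and $\{t\vec u+\vec r\}$ both lying in $C$ forces $\lfloor t\vec u+\vec r\rfloor$ into the $\epsilon$-tunnel); your extra care in ruling out $\vec n=\vec 0$ by taking large return times is a point the paper glosses over. The genuine gap is in the other direction, and you have diagnosed it correctly yourself: intrinsic recurrence produces $A_0=A\cap T^{-\vec n}A$ of positive but uncontrolled relative measure, so the density bookkeeping against an arbitrary set $B$ is circular. The problem is that your proposed repair does not close this gap, it relocates it. A wandering set $W$ of positive measure for the flow need not contain \emph{any} positive-measure product set $A\times C$ (a positive-measure subset of a product space generally contains no measurable rectangle), so you are forced to choose $A\times C$ merely with $W$-density close to $1$; but then the same circularity recurs verbatim: the returning piece $A_0\times C'$ supplied by intrinsic recurrence has no lower bound on its measure, hence may lie entirely in the small portion of $A\times C$ missed by $W$, and no contradiction with wandering follows. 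Likewise your claim that on such a set ``no tunnel vector can return $A$ to itself'' is not a consequence of flow-wandering without exactly this missing quantitative control. Since you yourself defer this verification as ``where the real work lies,'' the hard half of the theorem is unproven.

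The missing idea is the quantitative upgrade the paper builds in Section~\ref{structure}. Proposition~\ref{otherway} (proved by an exhaustion/maximal-chain argument) shows that a recurrent direction is \emph{uniform sweeping out}: for every $\alpha<\frac12$ one can find pairwise disjoint sets $A_1,\ldots,A_k\subset A$ and tunnel vectors $\vec v_1,\ldots,\vec v_k$ with the $T^{\vec v_i}A_i$ again pairwise disjoint subsets of $A$, and with both $\bigcup A_i$ and $\bigcup T^{\vec v_i}A_i$ of measure greater than $\alpha\,\mu(A)$. This replaces the single uncontrolled set $A_0$ by a family of definite total measure, which is precisely what breaks your circularity: applying your backwards dictionary to each $\vec v_i$ (with on-line vectors $\vec u_i$ closest to $\vec v_i$ and a slightly shrunken box) transfers this covering to the suspension on product sets $A\times R$, and Proposition~\ref{unifsweepgivesrecur} --- sweeping out on a countable dense family of sets implies recurrence for all sets --- then yields recurrence of the flow on arbitrary sets. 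This is exactly how the paper reduces the suspension claim to sets of the form $A\times R$; some such uniform covering statement is unavoidable at this step, and the Hopf decomposition is not a substitute for it.
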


\begin{proof}
Assume first that the $\mathbb{Z}^2$ action is recurrent in direction $\theta$.  Because the proofs of Propositions~\ref{otherway} and \ref{unifsweepgivesrecur} hold verbatim for $\mathbb R^2$ actions in order to prove the unit suspension is recurrent in direction $\theta$ it suffices to show that it is recurrent for sets of the form  $A\times R$ where $A\subset X$ is a subset of positive measure and $R\subset[0,1)^2$ is a rectangle $[i_1, i_2]\times [j_1, j_2]$.    Fix $\epsilon>0$ to be less than $\frac{1}{4}\min\{i_2- i_1, j_2-j_1 \}$.
By the directional recurrence of the $\mathbb Z^2$ action we can find $\vec n\in\mathbb Z^2$ in the $\epsilon$-tunnel of $\theta$ so that $\mu(A\cap T^{\vec{n}}A)>0$.

Choose $\vec u\in\mathbb R^2$ to be the vector associated with direction $\theta$ that is closest to $\vec n$.  Note that $\Vert\vec u-\vec n\Vert<\epsilon$.  
Let $\tilde R\subset R$ be the subset of $R$ consisting of those $(i,j)\in R$ that are at least a fraction $\frac{1}{4}\min\{ i_2- i_1, j_2-j_1 \}$ away from the edges of the square.  
By our choice of $\epsilon$  we then know that if $(s,t)\in\tilde R$ then $\vec u+(s,t)\in \vec{n}+R$ and therefore $\lfloor  \vec{u}+ \tilde{R}  \rfloor = \vec{n}$ and $\{\vec u+\tilde R\}\subset   R$.  Thus, if we set $\tilde A=A\cap T^{\vec n}A$ we have $\hat T^{\vec u}(\tilde A\times \tilde R)=(T^{\vec n}\tilde A\times R')$ for some $R'\subset R$ with area at least half of the area of $R$ and therefore
$\mu\times\lambda\left(\hat T^{\vec u}(\tilde A\times \tilde R)\cap(A\times R)\right)\ge\frac12 \, \mu\times\lambda(\tilde A\times\tilde R)>0$.
 
Now suppose that the unit suspension is recurrent in a direction $\theta$. 
 Fix $A\subset X$, a set of positive measure and $\epsilon>0$.  
Let $R\in[0,1)^2$ be a square based at the origin of side length $ \frac{1}{4}\epsilon$.  
By the recurrence of the unit suspension there is a vector $\vec{u}$ associated to $\theta$ for which the measure of $(A\times R)\cap \hat{T}^{\vec{u}}(A\times R)$ is positive.  
Consider $(x,\vec{r})$ in this intersection.   Since $\hat T^{\vec{u}}(x,\vec r)=(T^{\lfloor\vec u+\vec r\rfloor}x,\{\vec u+\vec r\})\in A\times R$ we have $\vec n=\lfloor\vec u+\vec r\rfloor$ with $T^{\vec n}x\in A$.  It remains to prove that $\vec n$ must lie in the $\epsilon$-tunnel of $\theta$.  To see this note that both $\vec r$ and $\vec j=\{\vec u+\vec r\}$ are in $R$.   On the other hand $\vec j=\vec u+\vec r-\vec n$ so $\Vert\vec u-\vec n\Vert=\Vert\vec j-\vec r\Vert$ and the latter is less than $\epsilon$ by the definition of $R$.  
 
\end{proof}

\section{The structure of $\mathcal R_T$}\label{structure}
\subsection{Uniform sweeping out}
We give an alternative characterization of recurrence that is useful for our arguments in this section.  In what follows we fix an infinite measure preserving $\mathbb Z^2$ action on a $\sigma$-finite Lebesgue space $(X,\mu,\{T^{\vec n}\}_{\vec n\in\mathbb Z^2})$ and we denote the $\sigma$-algebra of $\mu$-measurable sets by $\mathcal B$.

\begin{definition} \label{unifsweep}
    Let $A\in\mathcal{B}$ be such that $0<\mu(A)<\infty$.  A 
    direction $\theta$ is said to have the $\mathbf{\epsilon}${\bf{  
    sweeping out property}} for $A$ if for all $\alpha$ with $0<\alpha < 
    \frac{1}{2}$, there exist pairwise disjoint subsets $A_{1}, A_{2},\ldots,A_{k}$ 
    of $A$ of positive measure and vectors 
    $\vec{v}_{1},\ldots,\vec{v}_{k}$ in the $\epsilon$-tunnel of 
    $\theta$ so that the sets $T^{\vec v_1}A_1, \ldots, T^{\vec v_k}A_k$ are a pairwise disjoint collection of subsets of $A$ satisfying:
    
    \begin{gather}
     \mu(\bigcup_{i=1}^{k}A_{i}) > \alpha\cdot\mu(A),\text{ and}\label{AcoverA}\\
     \mu(\bigcup_{i=1}^{k}T^{\vec{v}_{i}}(A_{i})) > \alpha\cdot\mu(A)\label{TAcoverA}.
    \end{gather}

 If $\theta$ has the $\epsilon$ sweeping out property for $A$ for every $\epsilon>0$  we say it is {\bf{uniform sweeping 
    out}} for $A$.
\end{definition}

The next two results show that the recurrence of a direction can be characterized in terms of Definition~\ref{unifsweep}.
\begin{proposition}\label{otherway}
    A recurrent direction is uniform sweeping out for all measurable 
    sets $A$ of finite positive measure.
\end{proposition}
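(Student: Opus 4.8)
The plan is to reduce the two covering conditions \eqref{AcoverA} and \eqref{TAcoverA} to a single one, and then to build the required family by an exhaustion argument driven by recurrence. First I would observe that since each $T^{\vec v_i}$ is measure preserving and both $\{A_i\}$ and $\{T^{\vec v_i}A_i\}$ are pairwise disjoint families, we have $\mu(\bigcup_i A_i)=\sum_i\mu(A_i)=\sum_i\mu(T^{\vec v_i}A_i)=\mu(\bigcup_i T^{\vec v_i}A_i)$. Hence \eqref{AcoverA} and \eqref{TAcoverA} are automatically equal, and it suffices to produce, for each $\epsilon>0$ and each $\alpha<\tfrac12$, a finite collection of disjoint $A_i\subset A$ with pairwise disjoint images $T^{\vec v_i}A_i\subset A$, vectors $\vec v_i$ in the $\epsilon$-tunnel of $\theta$, and $\sum_i\mu(A_i)>\alpha\,\mu(A)$.

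Next I would explain where the threshold $\tfrac12$ enters. Given any valid partial family with total source measure $t$, let $B=A\setminus\bigcup_i A_i$ and $C=A\setminus\bigcup_i T^{\vec v_i}A_i$ be the uncovered sources and targets; by the equality above, $\mu(B)=\mu(C)=\mu(A)-t$. If $t<\tfrac12\mu(A)$ then $\mu(B)+\mu(C)>\mu(A)$, forcing $\mu(B\cap C)\ge\mu(A)-2t>0$. Setting $E=B\cap C$ and applying recurrence of $\theta$ (Definition~\ref{recurrentdirection}) to $E$, I obtain a vector $\vec w$ in the $\epsilon$-tunnel with $\mu(E\cap T^{\vec w}E)>0$; then $D=E\cap T^{-\vec w}E$ is a positive-measure subset of $E\subset B$ whose image $T^{\vec w}D\subset E\subset C$, so $(D,\vec w)$ legitimately extends the family while preserving both disjointness conditions.

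The hard part will be arranging that this extension procedure actually reaches the $\tfrac12$ threshold, since the pieces $D$ furnished by recurrence may be arbitrarily small. I would handle this with a greedy exhaustion: at stage $n$, with uncovered overlap $E_n=B_n\cap C_n$, set $\rho_n=\sup\{\mu(D):D\subset E_n,\ T^{\vec w}D\subset E_n,\ \vec w\text{ in the }\epsilon\text{-tunnel}\}$ and choose the next piece with $\mu(D_{n+1})>\tfrac12\rho_n$. Since the $D_{n+1}$ are disjoint subsets of $A$, their measures sum to at most $\mu(A)$, so $\mu(D_{n+1})\to0$ and hence $\rho_n\to0$. The sets $E_n$ decrease to $E_\infty=\bigcap_n E_n$, and if $\mu(E_\infty)>0$ then recurrence applied to $E_\infty$ produces a fixed positive-measure $D\subset E_\infty\subset E_n$ with $T^{\vec w}D\subset E_\infty\subset E_n$, which is an admissible competitor for every $\rho_n$ and contradicts $\rho_n\to0$. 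Therefore $\mu(E_\infty)=0$, and since $\mu(E_n)\ge\mu(A)-2t_n$ at every stage this forces the total $t_\infty=\sum_n\mu(D_n)\ge\tfrac12\mu(A)$.

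Finally I would conclude: as the partial sums increase to $t_\infty\ge\tfrac12\mu(A)$, for any prescribed $\alpha<\tfrac12$ a finite truncation of the family has total source measure exceeding $\alpha\,\mu(A)$, which by the first paragraph makes the union of images exceed $\alpha\,\mu(A)$ as well, giving both \eqref{AcoverA} and \eqref{TAcoverA}. Since $\epsilon>0$ and the set $A$ of finite positive measure were arbitrary, this shows a recurrent direction $\theta$ is uniform sweeping out for every such $A$, as required by Definition~\ref{unifsweep}.
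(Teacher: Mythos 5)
Your proof is correct, and its engine is the same as the paper's: both extend a partial family by applying recurrence to the uncovered overlap (your $E=B\cap C$ is literally the paper's residual set $A'_{i+1}=A\setminus\bigcup_{j\le i}(A_j\cup T^{\vec v_j}A_j)$), both take the new piece to be $D=E\cap T^{-\vec w}E$ with image $E\cap T^{\vec w}E$, and both reach the threshold $\tfrac12$ by the same counting, namely that the exhaustion can stop only when this overlap is null, while sources and targets together can fill $A$ at most twice over. Where you genuinely differ is the mechanism certifying that the overlap really is driven to a null set. The paper does this non-constructively: it views all possible extension sequences as chains in the poset of pairs of subsets of $A\times A$ under inclusion, passes to a maximal chain, and argues that maximality forces $\mu\bigl(\bigcup_i(A_i\cup T^{\vec v_i}A_i)\bigr)=\mu(A)$, since otherwise the chain could be extended. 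You instead run a greedy exhaustion with the sup-halving rule $\mu(D_{n+1})>\tfrac12\rho_n$, observe that $\sum_n\mu(D_n)\le\mu(A)<\infty$ forces $\rho_n\to 0$, and rule out $\mu(E_\infty)>0$ because recurrence applied to $E_\infty$ would yield a single admissible pair of fixed positive measure competing against every $\rho_n$. Your route is more elementary and fully constructive: it avoids the appeal to maximal chains, which the paper states rather informally, and it confronts head-on the danger (handled only abstractly by the paper's maximality argument) that recurrence might only ever furnish pieces too small to accumulate measure $\tfrac12\mu(A)$. The paper's route is shorter. Both arguments use the equality $\mu(\bigcup_iA_i)=\mu(\bigcup_iT^{\vec v_i}A_i)$, coming from measure preservation and disjointness, to deduce \eqref{TAcoverA} from \eqref{AcoverA}, exactly as in your first paragraph.
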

\begin{proof}
    Let $\theta$ be a recurrent direction.  Choose $\epsilon>0$, 
    $0<\alpha < \frac{1}{2}$, and $A$, a measurable set of finite 
    positive measure.  Since $\theta$ is recurrent, there is a vector 
    $\vec{v}_{1}$ in the $\epsilon$-tunnel of $\theta$ such 
    that $\mu(A\cap T^{\vec{v}_{1}}A)>0$.  Define
  $\hat{A}_{1} = A\cap T^{\vec{v}_{1}}A$ and $A_{1}=T^{-\vec{v}_{1}}\hat{A}_{1}.$

    Assume we have found $\vec{v}_{j}$, $\hat{A}_{j}$, and $A_{j}$ 
    for $1\le j\le i$ and set $A_{i+1}^{'}= A - 
    (\bigcup_{j=1}^{i}(\hat{A}_{j} \cup A_{j})\,)$.  If 
    $\mu(A_{i+1}^{'})>0$, use the recurrence of $\theta$ to 
    find $\vec{v}_{i+1}$ in the $\epsilon$-tunnel of $\theta $ so that
    $\mu(A_{i+1}^{'} \cap T^{\vec{v}_{i+1}}A_{i+1}^{'}) > 0.$
    Define
    \begin{equation*}
   \hat{A}_{i+1} = A_{i+1}^{'}\cap T^{\vec{v}_{i+1}}A_{i+1}^{'}\qquad\text{and}\qquad 
    A_{i+1}=T^{-\vec{v}_{i+1}}\hat{A}_{i+1}.
    \end{equation*}
    Fix $\vec{v}_{1}$ and note that the choices of $\vec v_i$ with $i>1$ described above are not unique.   Consider all possible extensions of 
    $(A_{1}, T^{\vec{v}_{1}}A_{1})$ as defined above:
    \begin{equation*}
    (A_{1}, T^{\vec{v}_{1}}A_{1}), \bigg( \bigcup_{i=1}^{2}A_{i}, \, \bigcup_{i=1}^{2} 
    T^{\vec{v}_{i}}A_{i}\bigg),\cdots,
\bigg( \bigcup_{i=1}^{k}A_{i}, \, \bigcup_{i=1}^{k} 
    T^{\vec{v}_{i}}A_{i}\bigg),\cdots .
    \end{equation*} 
 
Each such sequence forms a chain in the partially 
ordered set of pairs of measurable subsets of $A\times A$ under 
    the relation $(B,C)\le (B', C')$ if and only if $B\subset B'$ and 
    $C\subset C'$.  Since $(A,A)$ is a maximal element of this set, 
    each chain is contained in a maximal chain.  Choose such a 
    maximal chain and note that if 
    $$\mu \big(\bigcup_{i=1}^{\infty}(A_{i}\cup T^{\vec{v}_{i}}A_{i}) 
    \big) < 
    \mu(A)$$
    it would be possible to extend the chain; therefore we must have 
    equality.  But then it follows that
    \begin{align*}
       \mu(A) = \mu(\bigcup_{i=1}^{\infty}(A_{i}\cup 
    T^{\vec{v}_{i}}A_{i}) ) &\le \sum_{i=1}^{\infty}\mu(A_{i}\cup 
    T^{\vec{v}_{i}}A_{i})\\
   &\le \sum_{i=1}^{\infty} 2\, \mu(A_{i})=2\sum_{i=1}^{\infty}\mu(A_{i}).
    \end{align*}
    We can thus find $k$ so that 
    $$\mu(\bigcup_{i=1}^{k}A_{i}) = \sum_{i=1}^{k} \mu(A_{i}) > \alpha\, \mu(A),$$
    satisfying (1).
    
    Since the action is measure preserving 
    \begin{equation*}
    \sum_{i=1}^{k} \mu(A_{i})=\sum_{i=1}^{k}\mu(T^{\vec{v}_i}A_{i})
    \end{equation*}
    and (2) is also satisfied.
     \end{proof}

The following is a converse result, showing that if a direction has the uniform sweeping out property for a sufficiently good collection of sets, then it is a recurrent direction. 

\begin{proposition}\label{unifsweepgivesrecur}
    Let $\mathcal{A}$ be a countable collection of sets of finite 
    positive measure which are dense in the $\sigma$-algebra 
    $\mathcal{B}$. If $\theta$ is a direction which is uniform 
    sweeping out for all $A\in\mathcal{A}$, then $\theta$ is a 
    recurrent direction.
\end{proposition}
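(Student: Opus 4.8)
The plan is to verify the recurrence criterion of Definition~\ref{recurrentdirection} directly: given an arbitrary measurable set $B$ with $\mu(B)>0$ and an arbitrary $\epsilon>0$, I must produce a vector $\vec n$ in the $\epsilon$-tunnel of $\theta$ with $\mu(B\cap T^{\vec n}B)>0$. Since the hypothesis only supplies uniform sweeping out for sets in the dense family $\mathcal A$, the first step is to replace $B$ by a good approximant. I would fix a small $\delta>0$ (to be calibrated at the end in terms of $\mu(B)$) and use density of $\mathcal A$ to choose $A\in\mathcal A$ with $\mu(A\bigtriangleup B)<\delta$; in particular $\mu(A)>\mu(B)-\delta$ and $\mu(A\setminus B),\ \mu(B\setminus A)<\delta$.

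Next I would invoke the $\epsilon$ sweeping out property for $A$, which $\theta$ enjoys by assumption, with a fixed parameter $\alpha\in(0,\tfrac12)$ (for concreteness $\alpha=\tfrac14$). This produces pairwise disjoint $A_1,\dots,A_k\subset A$ and vectors $\vec v_1,\dots,\vec v_k$ in the $\epsilon$-tunnel with images $T^{\vec v_i}A_i$ that are pairwise disjoint subsets of $A$, and with $\mu(\bigcup_iA_i)>\alpha\mu(A)$. Writing $E=\bigcup_iA_i$ and $F=\bigcup_iT^{\vec v_i}A_i$, I would package these data as a single piecewise-translation map $\phi\colon E\to F$ with $\phi|_{A_i}=T^{\vec v_i}$. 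The disjointness of the domains and of the images makes $\phi$ a measure-preserving bijection, so that $\mu(\phi^{-1}(S))=\mu(S)$ for every $S\subset F$.

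The heart of the argument is a counting estimate showing that $\phi$ must carry a positive-measure piece of $B$ back into $B$. Since $E\subset A$, the set of $x\in E$ with $x\notin B$ has measure at most $\mu(A\setminus B)<\delta$; and since $\phi(x)\in F\subset A$, the set of $x\in E$ with $\phi(x)\notin B$ equals $\phi^{-1}\bigl(F\cap(A\setminus B)\bigr)$, whose measure is at most $\mu(A\setminus B)<\delta$ by measure-preservation. Hence the ``good'' set $\{x\in E:\ x\in B,\ \phi(x)\in B\}$ has measure at least $\mu(E)-2\delta>\alpha\mu(A)-2\delta$, and calibrating $\delta$ (e.g.\ $\delta<\tfrac1{10}\mu(B)$, so that $\alpha\mu(A)-2\delta>0$) makes this positive. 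Because the good set is the \emph{finite} union over $i$ of its intersections with the $A_i$, some index $i$ satisfies $\mu\bigl(\{x\in A_i:\ x\in B,\ T^{\vec v_i}x\in B\}\bigr)>0$. For such $x$ the point $y=T^{\vec v_i}x$ lies in $B\cap T^{\vec v_i}B$, so $\mu(B\cap T^{\vec v_i}B)>0$ with $\vec v_i$ in the $\epsilon$-tunnel, completing the verification.

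The main obstacle I anticipate is purely in the bookkeeping of the third paragraph: one must ensure that the two error contributions are both controlled by the single quantity $\mu(A\setminus B)$, and that this forces the good set to have positive measure before passing to an individual $A_i$. The step that makes this clean is recognizing $\phi$ as a measure-preserving map, so that the constraint $\phi(x)\in B$ becomes a statement about $F\cap(A\setminus B)$ rather than about the individual behaviour of each $T^{\vec v_i}$; choosing $\alpha$ bounded away from $0$ and $\delta$ small relative to $\mu(B)$ then does the rest.
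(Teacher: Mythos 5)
Your proof is correct, and its central step is organized genuinely differently from the paper's. Both arguments share the same skeleton: approximate the target set by some $A\in\mathcal A$, invoke the $\epsilon$ sweeping out property for $A$, and extract a single index $i$ with $\mu(B\cap T^{\vec v_i}B)>0$. But the extraction step differs. The paper works index by index: it defines $\mathcal I$ (resp.\ $\mathcal J$) to be the set of indices for which $A_i$ (resp.\ $T^{\vec v_i}A_i$) is at least $(1-2\sqrt\epsilon)$-covered by the target set, uses Markov-type estimates to show each family of bad indices carries little measure, concludes $\mathcal I\cap\mathcal J\neq\emptyset$, and for $i\in\mathcal I\cap\mathcal J$ uses the fact that two subsets of $T^{\vec v_i}A_i$, each of more than half its measure, must intersect. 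You instead aggregate the data into the single measure-preserving bijection $\phi\colon E\to F$ and count bad \emph{points} rather than bad \emph{indices}: the two error sets $\{x\in E:\ x\notin B\}$ and $\{x\in E:\ \phi(x)\notin B\}$ each have measure less than $\delta$, so the good set has measure exceeding $\alpha\mu(A)-2\delta>0$, and only at the very end do you pigeonhole over the finitely many $i$. This is cleaner: it avoids the $\sqrt\epsilon$ thresholds and the relative-coverage bookkeeping entirely, and it makes visible that condition \eqref{TAcoverA} of Definition~\ref{unifsweep} is never needed (it follows from \eqref{AcoverA} by measure preservation, as the paper itself observes at the end of the proof of Proposition~\ref{otherway}).

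One small omission: Definition~\ref{recurrentdirection} quantifies over \emph{all} measurable sets of positive measure, including sets of infinite measure, and your approximation step $\mu(A\triangle B)<\delta$ is impossible when $\mu(B)=\infty$, since every $A\in\mathcal A$ has finite measure. The fix is one line, and the paper includes it: by $\sigma$-finiteness a set of infinite measure contains a subset $B'$ of finite positive measure, and any $\vec n$ with $\mu(B'\cap T^{\vec n}B')>0$ also satisfies $\mu(B\cap T^{\vec n}B)>0$. With that reduction added, your argument is complete.
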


\begin{proof}
Let $C$ be a subset of $X$ with infinite measure.  Since $X$ is $\sigma$-finite we can write $C=\cup _{i=1}^\infty C_i$ with $C_i\subset C_{i+1}$ and $0<\mu (C_i) <\infty$ for all $i$.  Thus it suffices to prove that $\theta$ is a direction of recurrence for sets of finite positive measure.

Fix a set $C$ of finite, positive measure and $\epsilon>0$.  As the collection $\mathcal A$ is dense there is a set $A\in\mathcal A$ of positive, finite measure, with the property that 
\begin{equation}\label{AandC}
\mu(A\triangle C)<\epsilon \, \mu(A).
\end{equation}

Fix $\frac14<\alpha<\frac12$.  Choose sets $A_i\subset A$ and vectors $\{\vec v_i\}$ in the $\epsilon$-tunnel of $\theta$, $i=1,\ldots,k$, satisfying Definition~\ref{unifsweep} for the direction $\theta$.  The rest of the argument, while technical, relies on the idea that since the sets $\cup A_i$ and $\cup T^{\vec v_i} A_i$ both cover at least 
$\alpha$ of the set $A$, and $A$ is an approximation of the set $C$,  provided $\epsilon$ is chosen small enough, there is an index $i$ with the property that both
\begin{equation}\label{want}
\mu(A_i\cap C)>\frac12\mu(A_i)\qquad\text{and}\qquad\mu(T^{\vec v_i}A_i\cap C)>\frac12\mu(T^{\vec v_i} A_i).
\end{equation}
We thus have $\vec v_i\in\mathbb Z^2$ in the $\epsilon$-tunnel of $\theta$ for which $\mu(C\cap T^{\vec v_i}C)>0$. Now the details.

It follows easily from \eqref{AandC} that if we consider $\cup A_i \cap C$, rather than $\cup A_i$, we lose a set of small measure.  More formally
\begin{align*}
\mu\left(\cup_{i=1}^kA_i \cap C\right)&\ge\mu\left(\cup_{i=1}^kA_i\right)-\mu(A\triangle C)\\
&\ge \mu\left(\cup_{i=1}^kA_i\right)-\epsilon\, \mu (A).
\end{align*}
On the other hand, our choice of $\alpha$ and the statement in  \eqref{AcoverA} gives that $4\mu(\cup_{i=1}^k A_i)>\mu(A)$ so we have
\begin{equation*}
\mu\left(\cup_{i=1}^kA_i \cap C\right)>(1-4\epsilon)\mu(\cup_{i=1}^kA_i).
\end{equation*}

We now argue that the collection of sets $A_i$ that are well covered by $C$ make up most of the union, in measure.  Let
 $\mathcal{I} \subset \{ 1,..., k\}$ be the collection of $i$'s such that $\mu(A_i \cap C)> (1-2\sqrt{\epsilon}) \mu(A_i)$.

Note that
\begin{align*}
  \mu\left(\cup_{i=1}^kA_i \cap C \right) &= \mu\left(\cup_{i\in\mathcal{I}} A_i \cap C \right)  + \mu\left(\cup_{i\notin\mathcal{I}} A_i \cap C \right) \\
& \le \mu\left(\cup_{i\in\mathcal{I}} A_i  \right) + (1-2\sqrt{\epsilon})\mu\left(\cup_{i\notin\mathcal{I}} A_i  \right) \\
& = \mu\left(\cup_{i=1}^k A_i  \right)  - 2\sqrt{\epsilon}\,  \mu\left(\cup_{i\notin\mathcal{I}} A_i  \right).
\end{align*}
We thus have 
\begin{equation*}
(1-4\epsilon)\mu\left(\cup_{i=1}^k A_i\right)<\mu\left(\cup_{i=1}^kA_i\cap C\right)\le\mu\left(\cup_{i=1}^k A_i\right)-2\sqrt\epsilon\mu\left(\cup_{i\notin\mathcal{I}} A_i  \right)
\end{equation*}
which implies that 
\begin{equation*}
 \mu\left(\cup_{i\notin\mathcal{I}} A_i  \right) \le  2\sqrt{\epsilon} \mu\left(\cup_{i=1}^kA_i \right).
 \end{equation*}
 
We now make an analogous argument for the sets $T^{\vec v_i}A_i$.  We begin by using \eqref{TAcoverA} to argue, similarly to before, that we have
\begin{equation*}
\mu\left(\cup_{i=1}^kT^{\vec v_i}A_i \cap C\right)>(1-4\epsilon)\mu\left(\cup_{i=1}^kT^{\vec v_i}A_i\right).
\end{equation*}
We can similarly define $\mathcal{J} \subset \{ 1,..., k\}$ to be the collection of $j$'s such that $\mu(T^{\vec v_j}A_j \cap C)> (1-2\sqrt{\epsilon}) \mu(A_j)$ and show that 
$$2\sqrt{\epsilon} \mu\left(\cup_{j=1}^k T^{\vec v_j}A_j \right) =2\sqrt\epsilon\mu\left(\cup_{j=1}^k A_j\right)\ge  \mu\left(\cup_{j\notin\mathcal{J}} A_j  \right)=\mu\left(\cup_{j\notin\mathcal J} T^{\vec v_j}A_j\right).$$

Finally, if we consider only those indices that lie in both $\mathcal I$ and $\mathcal J$ we can conclude
\begin{align*}
\mu\left(\cup_{i\in\mathcal{I}\cap\mathcal{J}} A_i  \right) & \ge (1-4\sqrt{\epsilon} ) \mu\left(\cup_{i=1}^kA_i \right) \mbox{ and} \\
\mu\left(\cup_{i\in\mathcal{I}\cap\mathcal{J}} T^{\vec v_i}A_i  \right) & \ge (1-4\sqrt{\epsilon} ) \mu\left(\cup_{i=1}^k T^{\vec v_i}A_i \right).
\end{align*}
Thus $\mathcal I\cap\mathcal J\neq\emptyset$ and for any $i\in \mathcal{I}\cap \mathcal{J}$  we will have 
\begin{equation*}
\mu(A_i\cap C)>(1-2\sqrt\epsilon)\mu(A_i) \mbox{ and}
\end{equation*}
\begin{equation*}
\mu(T^{\vec v_i}A_i\cap C)>(1-2\sqrt\epsilon)\mu(T^{\vec v_i}A_i).
\end{equation*}
Clearly if $\epsilon$ is chosen small enough then  \eqref{want} holds.

\end{proof}
\subsection{Proof of Theorem~\ref{Gdelta}}
We now have all the ingredients in place to prove that $\mathcal R_T$ must be a $G_\delta$ set.

Let $\mathcal{A}=\{ A_i \}$ be a countable collection of sets of 
    finite positive measure which are dense in $\mathcal{B}$.  Then 
    by Propositions~\ref{otherway} and~\ref{unifsweepgivesrecur}, we have 
    $$\mathcal{R}_T = \cap_{i=1}^{\infty} \{ \theta : \theta \mbox{ 
    has the uniform sweeping out property under $T$ for } A_i \}.$$ 
    By Definition \ref{unifsweep}, we can rewrite this set as:
    $$\mathcal{R}_T = \cap_{i=1}^{\infty} \, \cap_{n=1}^{\infty} \{ 
    \theta: \mbox{ $\theta$ has the $\frac{1}{n}$ sweeping out 
    property under $T$ for } A_i \}.$$  
    Thus we just need to show that for any set $A\in\mathcal B$ of positive finite measure, a set of the form
     $$\{ \theta : \theta \mbox{ has the $\epsilon$ sweeping out property 
    under $T$ for set $A$} \}$$ is open.  

Fix such a set $A$ and $\epsilon >0$, and let $\theta$ be in the set 
defined above. Fix a vector $\vec{v}_i$ in the $\epsilon$-tunnel 
of $\theta$ given by Definition \ref{unifsweep} and denote its associated direction by $\omega_i$.  By the continuity of the sine function there exists an $\eta_i = \eta(\vec v_i)$ so that $\vec v_i$ lies in the $\epsilon$-tunnel of all directions in the interval $(\omega_i-\eta_i,\omega_i+\eta_i)$.  Further, $\eta_i$ can be chosen in such a way that $\theta$ lies in this interval.  Choose $\delta_i$  so that $(\theta-\delta_i,\theta+\delta_i)\subset(\omega_i-\eta_i,\omega_i+\eta_i)$.  Let $\delta=\min_{i=1,\cdots,k} \delta_i$.  
Then the 
interval $(\theta - \delta, \theta + \delta)$ consists of directions 
that contain all the $\vec{v}_i$'s in their $\epsilon$-tunnel, and thus all 
have the $\epsilon$-uniform sweeping out property under $T$ for the set $A$, as wanted.

\section{Examples of $\mathcal R_T$}\label{exs}
\subsection{Infinite measure preserving rank one actions}
The examples we construct in this paper will be rank one infinite measure preserving
actions defined on measurable subsets of $\mathbb R^+$ .  We will
construct them using standard cutting and stacking methods.
In this section we establish the notation and facts about such
actions which we need for our arguments.  For more general
discussions of rank one transformations and cutting and stacking we refer the
reader to \cite{Fersurvey}, for rank one non-singular transformations to \cite{RudSilva}, and for discussions of rank one actions in higher dimensions to \cite{JS1} and \cite{RSdirent}.

The following notation will be of use to us in our constructions.
\begin{notation}
Let $B\subset\mathbb R^2, \vec v\in\mathbb R^2$.  The $\vec v$-tunnel of $B$ is the set   $B+\{t\vec v\}_{t\in\mathbb R}$.
\end{notation}

 \begin{notation}
For $n\in\mathbb N$ we set $B_n=[0,n)^2\cap\mathbb Z^2$ and $\overline B_n=(-n,n)^2\cap\mathbb Z^2$.  Given a square $B=\vec v+B_n$ or $\vec v +
\overline{B}_{n}$ we call the
point $\vec v$ the {\it base point} of $B$.  A vector $\vec
v=(x,y)\in\mathbb Z^2$ with $x,y>0$ will be called a {\it positive
vector}.
\end{notation}

Fix $n_1\in\mathbb N$ and let $\{I^1_{\vec v}\}_{\vec v\in B_{n_1}}$ be a pairwise disjoint collection of equal finite length interval subsets of $\mathbb R$ indexed by $B_{n_1}$.   Geometrically we think of the intervals as arranged in the shape of $B_{n_1}$ according to their indexing vector.  We start defining an action $T$ by mapping levels to each other by translation:
\begin{equation*} 
T^{\vec w}(I^1_{\vec v})=I_{\vec v+\vec w}^1
\end{equation*}
whenever $\vec v,\vec w\in B_{n_1}$ are such that $\vec v+\vec w\in
B_{n_1}$.  
The collection of intervals, denoted by $\tau_1$, is clearly a tower for the $\mathbb Z^2$ action.  Each interval will be called a {\it level} of the tower and $I_{\vec 0}^1$ will be called the {\it base} of the tower.  

Given a tower $\tau_i, i\ge 1$, we define the next tower 
$\tau_{i+1}$ by
dividing the base $I^i_{\vec 0}$ into $k(i)$ equal length subintervals, $I^{i,j}_{\vec 0}, j=1,\ldots,k(i)$ for some choice of $k(i)\in\mathbb N$.   For
$j=1,\cdots,k(i)$ we call the subsets 
\begin{equation*}
\tau_i^j=\cup_{\vec v\in B_{n_i}} T^{\vec v}I^{i,j}_{\vec 0}
\end{equation*}
the {\it slices} of $\tau_i$. 
The tower $\tau_{i+1}$ is then constructed by first placing these slices of $\tau_i$ at $k(i)$ locations in $\mathbb Z^2$ so that they do not intersect.  These locations are denoted by $\mathcal P_i(1)$.  In our constructions, $\mathcal P_i(1)$ will always include $\vec 0$.  Define $n_{i+1}\in\mathbb N$ to be the smallest integer so that $B_{n_{i+1}}$ contains the set
\begin{equation*}
\bigcup_{\vec v\in\mathcal P_i(1)} ( B_{n_i}+\vec v )
\end{equation*} 
and set $\mathcal S_{i+1}=B_{n_{i+1}}\setminus\big(\cup_{\vec
v\in\mathcal P_i} B_{n_i}+\vec v)$.  For each $\vec v\in\mathcal
S_{i+1}$ we choose an interval $s_{\vec v}$ of length equal to that of the intervals constituting the slices of $\tau_i$ in such a way that the resulting collection of intervals is pairwise disjoint.  These are called the {\it spacer intervals} of the $i+1$ stage, or of $\tau_{i+1}$.  
To complete the construction of $\tau_{i+1}$ we assign spacer intervals to the locations in $B_{n_{i+1}}$ not occupied by the slices of $\tau_i$ according to their indexing vector. 

We rename the intervals $\{ I_{\vec{v}}^{i+1} \}_{\vec{v}\in 
B_{n_{i+1}}}$ with $I_{\vec 0}^{i+1}$ denoting  the base of the tower.  Finally, the definition of $T$ is extended to $\tau_{i+1}$ by setting
 \begin{equation*} 
T^{\vec w}(I^{i+1}_{\vec v})=I_{\vec v+\vec w}^{i+1}
\end{equation*}
whenever $\vec v,\vec w\in B_{n_{i+1}}$ are such that $\vec v+\vec
w\in
B_{n_{i+1}}$.  

Let $X$ denote the subset of $\mathbb R$ given by the union of the towers, with the $\sigma$-algebra generated by the levels of the towers.  In the case that $n_i\rightarrow\infty$ and the measure of the levels
of the towers goes to $0$, the resulting action $(X,\mu,\{T^{\vec v}\}_{\vec v\in\mathbb Z^2})$ is
a rank one $\mathbb Z^2$ action.  We say that $\{\tau_i\}$ is a sequence of towers associated to $T$.
As the levels of $\tau_i$ all have the
same measure, $T$ is clearly (possibly infinite) measure
preserving.

Note that just as we can see slices of $\tau_{i}$ inside $\tau_{i+1}$, we can 
find slices of those slices inside $\tau_{i+2}$, and so on.  
 
The relative positions of all the slices of $\tau_i$ inside later towers are important for our arguments.  To this end, for all $j\ge 1$ we define $\mathcal P_i(j)$ to be those vectors $\vec u\in B_{n_{i+j}}$ with the property that $T^{\vec u}$ maps the base of one slice of $\tau_i$ in $\tau_{i+j}$ to the base of another slice.  Let $\mathcal P_i=\cup_{j\ge1}\mathcal P_i(j)$.  We call the elements of $\mathcal P_i$ {\em times of strong recurrence} for the tower $\tau_i$.  The next lemma is immediate.

\begin{lemma}\label{wholetower}
Let $I = I_{\vec{v}}^{i}$ be a level of tower $\tau_{i}$.  Then
   $T^{\vec{u}} I \cap I \neq \emptyset$ if and only if $\vec{u}$ is a time
   of strong recurrence for tower $\tau_{i}$.
\end{lemma}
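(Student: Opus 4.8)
The plan is to track the location of the fixed level $I = I^i_{\vec v}$ inside the later towers $\tau_{i+j}$, using that $T^{\vec u}$ acts on the levels of each tower by shifting the indexing vector by $\vec u$ whenever that shift stays inside the tower. The geometric facts I would record first are these: each slice of $\tau_i$ sitting inside $\tau_{i+j}$ is a subdivided copy of the whole tower $\tau_i$; inside such a copy the level $I^i_{\vec v}$ occupies the same offset $\vec v$ from the slice's base; hence if that slice has base point $\vec p \in B_{n_{i+j}}$, the portion of $I$ lying in it is exactly the level $I^{i+j}_{\vec p + \vec v}$, and $I$ is the disjoint union of these sub-levels taken over all slices of $\tau_i$ in $\tau_{i+j}$.

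For the implication $\vec u \in \mathcal P_i \Rightarrow T^{\vec u} I \cap I \neq \emptyset$, I would take $\vec u \in \mathcal P_i(j)$, so that $T^{\vec u}$ carries the base of one slice of $\tau_i$ in $\tau_{i+j}$, with base point $\vec p$, to the base of another, with base point $\vec q$; internality of this displacement gives $\vec q = \vec p + \vec u$. Applying the same shift to the offset-$\vec v$ levels shows $T^{\vec u}$ maps $I^{i+j}_{\vec p + \vec v}$ onto $I^{i+j}_{\vec q + \vec v}$, both of which are sub-levels of $I$ by the first paragraph. Thus $T^{\vec u} I \cap I$ contains an entire level of $\tau_{i+j}$ and is nonempty, in fact of positive measure.

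For the converse I would start from a point $x \in I$ with $T^{\vec u} x \in I$ and choose $j$ large, using $n_{i+j} \to \infty$, so that $T^{\vec u}$ acts internally on $x$ at stage $i+j$, meaning the index of the level containing $T^{\vec u} x$ is the index of the level containing $x$ plus $\vec u$. Since $x, T^{\vec u}x \in I \subset \tau_i$, each lies in some slice of $\tau_i$ in $\tau_{i+j}$, with base points $\vec p$ and $\vec q$ respectively, so the two levels are indexed by $\vec p + \vec v$ and $\vec q + \vec v$. Internality then forces $\vec q - \vec p = \vec u$, which is exactly the condition that $T^{\vec u}$ maps the base of one slice of $\tau_i$ in $\tau_{i+j}$ to the base of another; hence $\vec u \in \mathcal P_i(j) \subseteq \mathcal P_i$.

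The one point that is not pure bookkeeping, and the step I would treat as the main obstacle, is the assertion used in the converse that for all sufficiently large $j$ the fixed displacement $T^{\vec u}$ is realized internally on $x$ inside $\tau_{i+j}$, so that the two level indices differ by exactly $\vec u$. This is intrinsic to the cutting and stacking construction: the towers exhaust both $X$ and the $\mathbb Z^2$-action, so any fixed group element eventually acts as an interior index-shift of a tall enough tower on a fixed point; this is the sense in which the lemma is immediate. I would also flag the sign convention for $\mathcal P_i(j)$: the difference $\vec q - \vec p$ of two base points need not have nonnegative entries, so its defining membership condition should be read with $\vec u$ ranging over $\overline B_{n_{i+j}}$ rather than $B_{n_{i+j}}$.
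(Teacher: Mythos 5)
Your proof is correct, and it is essentially the argument the paper leaves implicit: the paper states Lemma~\ref{wholetower} without proof (``the next lemma is immediate''), and your slice-by-slice bookkeeping --- $I$ decomposes as the disjoint union of the levels $I^{i+j}_{\vec p+\vec v}$ over the base points $\vec p$ of the slices of $\tau_i$ in $\tau_{i+j}$, with $T^{\vec u}$ acting as an interior index shift at a sufficiently late stage --- is exactly the reasoning being taken for granted. Your flag about the sign convention is also well taken: as literally written, $\mathcal P_i(j)\subset B_{n_{i+j}}$ would not be symmetric under $\vec u\mapsto -\vec u$ while the set of $\vec u$ with $T^{\vec u}I\cap I\neq\emptyset$ is, so the definition should indeed be read with $\vec u$ ranging over $\overline B_{n_{i+j}}$ (consistent with the centered towers used in the proof of Theorem~\ref{all}).
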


There are several well known properties of rank one actions that we will use extensively.  

\begin{proposition}\label{wellapprox}
Let $(X,\mu,\{T^{\vec v}\}_{\vec v\in\mathbb Z^2})$ be a rank one action, and $\{\tau_i\}$ an associated sequence of towers.  Given a measurable set $A$ with $0<\mu(A)<\infty$ and $\epsilon>0$
there is an $i_0$ such that for all $i>i_0$ there is a level $I$ of
$\tau_i$ with the property that
\begin{equation}\label{more}
\mu(A\cap I) >(1-\epsilon)\mu(I).
\end{equation}
\end{proposition}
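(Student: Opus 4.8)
The plan is to reduce the proposition to an elementary counting estimate, using two structural features of the cutting and stacking construction: that finite unions of levels are dense in $\mathcal B$, and that the levels of later towers refine those of earlier towers.

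First I would record the \emph{refinement property}. By construction, passing from $\tau_i$ to $\tau_{i+1}$ subdivides the base $I^i_{\vec 0}$ into the $k(i)$ equal subintervals $I^{i,j}_{\vec 0}$, so each level $I^i_{\vec v}=T^{\vec v}I^i_{\vec 0}$ of $\tau_i$ is the disjoint union of the $k(i)$ levels $T^{\vec v}I^{i,j}_{\vec 0}$ of $\tau_{i+1}$. Iterating, every level of $\tau_j$ with $j\le m$ is a disjoint union of levels of $\tau_m$. Next, since $\mathcal B$ is by definition generated by the levels and $\mu$ is $\sigma$-finite with each level of finite measure, the standard approximation theorem lets me choose, for a small parameter $\delta>0$ to be fixed below, a finite union of levels $U$ with $\mu(A\triangle U)<\delta\,\mu(A)$. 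Each level appearing in $U$ lies in some tower $\tau_j$; letting $i_0$ be the largest such index and invoking the refinement property, I may rewrite $U$ as a disjoint union of levels of $\tau_{i_0}$, hence of levels of $\tau_i$ for \emph{every} $i\ge i_0$.

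Now fix $i\ge i_0$ and call a level $I$ of $\tau_i$ contained in $U$ \emph{bad} if $\mu(A\cap I)\le(1-\epsilon)\mu(I)$, equivalently $\mu(I\setminus A)\ge\epsilon\,\mu(I)$. Summing over bad levels gives $\epsilon\sum_{\mathrm{bad}}\mu(I)\le\mu(U\setminus A)\le\mu(A\triangle U)<\delta\,\mu(A)$, while $\mu(U)\ge\mu(A)-\mu(A\setminus U)>(1-\delta)\mu(A)$. Hence the good levels inside $U$ have total measure at least $(1-\delta-\delta/\epsilon)\,\mu(A)$, which is strictly positive once $\delta$ is chosen small enough (say $\delta=\tfrac13\min\{1,\epsilon\}$), a choice I would fix at the outset. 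Consequently at least one good level $I$ exists, and any such level satisfies \eqref{more}. Since this produces a good level for every $i\ge i_0$, the proposition follows with this $i_0$.

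The only step requiring genuine care is the bookkeeping that turns $U$ into a union of levels of the single tower $\tau_{i_0}$, which rests entirely on the refinement property; once that is in hand, the remainder is the displayed counting estimate. I expect the main (and rather mild) obstacle to be confirming that the approximation of $A$ by finite unions of levels is legitimate in the $\sigma$-finite, infinite-measure setting, but this is immediate because each level has finite measure, $\mu(A)<\infty$, and the levels generate $\mathcal B$.
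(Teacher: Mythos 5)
Your proof is correct, and in fact the paper offers no proof of Proposition~\ref{wellapprox} at all: it is stated as one of the ``well known properties of rank one actions,'' with the background references (\cite{Fersurvey}, \cite{RudSilva}) standing in for the argument. What you have written is precisely the standard argument: approximate $A$ in symmetric difference by a finite union of levels, push that union down into a single tower $\tau_{i_0}$ using the refinement property, and then run the counting estimate over bad levels. The one point I would make explicit, rather than leave to the phrase ``standard approximation theorem,'' is \emph{why} that theorem applies here: the collection of finite unions of levels is a genuine ring generating $\mathcal B$, because any two levels are either disjoint or nested --- each level of $\tau_j$ is a disjoint union of levels of $\tau_m$ for $m\ge j$, and spacer levels introduced at stage $m$ are disjoint from all earlier towers --- so the difference of two finite unions of levels is again a finite union of levels. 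Since $X$ is a countable union of levels of finite measure, $\mu$ is $\sigma$-finite on this ring, and the approximation of the finite-measure set $A$ by ring elements is then the classical lemma. With that observation recorded, your argument is complete; the counting step itself (bad levels carry at most $(\delta/\epsilon)\mu(A)$ of mass, $U$ carries more than $(1-\delta)\mu(A)$, so good levels survive once $\delta=\tfrac13\min\{1,\epsilon\}$) is airtight and gives the conclusion for every $i\ge i_0$.
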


\begin{theorem}
A rank one measure preserving $\mathbb Z^2$ action is ergodic and
recurrent.

\end{theorem}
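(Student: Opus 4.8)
The plan is to exploit the approximation property of Proposition~\ref{wellapprox} together with the rigid translation structure of the towers, treating ergodicity and recurrence separately. The one structural fact I will use repeatedly is that within a single tower $\tau_i$ the action is transitive on levels: for levels $I^i_{\vec v}$ and $I^i_{\vec w}$ one has $T^{\vec w-\vec v}I^i_{\vec v}=I^i_{\vec w}$, and all levels of $\tau_i$ have equal measure.

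\textbf{Ergodicity.} I would argue by contradiction: suppose $E$ is $T$-invariant with $\mu(E)>0$ and $\mu(E^c)>0$. Since $X$ is $\sigma$-finite, choose sets of finite positive measure $A\subseteq E$ and $A'\subseteq E^c$, and fix $\epsilon<\tfrac12$. The key observation is that if an invariant set meets one level $I$ of $\tau_i$ in more than a $(1-\epsilon)$-fraction, then by invariance and measure preservation it meets \emph{every} level of $\tau_i$ in more than a $(1-\epsilon)$-fraction, since $\mu(E\cap T^{\vec w}I)=\mu(E\cap I)$. Applying Proposition~\ref{wellapprox} to $A$ and to $A'$, for all $i$ large enough there are a level of $\tau_i$ that is $(1-\epsilon)$-filled by $E$ and a level that is $(1-\epsilon)$-filled by $E^c$; spreading each conclusion across all levels of a common tower $\tau_i$ forces some level $J$ to satisfy both $\mu(E\cap J)>(1-\epsilon)\mu(J)$ and $\mu(E^c\cap J)>(1-\epsilon)\mu(J)$, which is impossible for $\epsilon<\tfrac12$. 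This two-set formulation is what sidesteps the fact that $\mu(X)=\infty$ prevents passing directly to a density limit in a single tower.

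\textbf{Recurrence.} Given $A$ of positive measure I may assume $0<\mu(A)<\infty$ by passing to a subset. Fix $\epsilon=\tfrac18$ and, using Proposition~\ref{wellapprox}, choose a tower $\tau_i$ with $k(i)\ge 2$ (cutting must occur for infinitely many $i$, as otherwise the level measures would not tend to $0$) and a level $I=I^i_{\vec v}$ of $\tau_i$ with $\mu(A\cap I)>(1-\epsilon)\mu(I)$. Inside $\tau_{i+1}$ the level $I$ splits as a disjoint union of its $k(i)$ slice-portions $P_1,\ldots,P_{k(i)}$, each of measure $\mu(I)/k(i)$ and each of the form $I^{i+1}_{\vec p_j+\vec v}$ with $\vec p_j\in\mathcal P_i(1)$. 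A pigeonhole count on $\mu(I\setminus A)<\epsilon\,\mu(I)$ shows that at least two of these portions, say $P_m$ and $P_l$ with $m\neq l$, satisfy $\mu(A\cap P_m)>\tfrac12\mu(P_m)$ and $\mu(A\cap P_l)>\tfrac12\mu(P_l)$. Taking the nonzero time of strong recurrence $\vec u=\vec p_l-\vec p_m$, which carries $P_m$ onto $P_l$ (Lemma~\ref{wholetower}), both $T^{\vec u}(A\cap P_m)$ and $A\cap P_l$ are subsets of $P_l$ of measure exceeding $\tfrac12\mu(P_l)$, so they overlap in positive measure; hence $\mu(A\cap T^{\vec u}A)>0$.

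\textbf{Main obstacle.} I expect the delicate point to be the recurrence half, namely producing a \emph{positive-measure} overlap uniformly even when the number of cuts $k(i)$ grows without bound. A naive argument asking $T^{\vec u}$ to carry a single well-approximated slice-portion back into $A$ would need the approximation to be finer than $1/k(i)$, which is circular because the tower (and hence $k(i)$) is selected after $\epsilon$. The pigeonhole step resolves this: it uses only the fixed threshold $\epsilon<\tfrac14$ and locates two ``more-than-half-covered'' slice-portions regardless of how large $k(i)$ is, after which two subsets of a common portion each of measure more than half must intersect. Confirming that $\vec p_l-\vec p_m$ is a genuine time of strong recurrence mapping $P_m$ onto $P_l$, and that $k(i)\ge 2$ for suitable $i$, are the remaining routine checks.
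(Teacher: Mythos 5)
Your proof is correct, and it splits naturally into two comparisons. For recurrence, your argument is essentially the paper's: the paper likewise applies Proposition~\ref{wellapprox} with $\epsilon<\frac{1}{4}$, views the well-covered level as the union of its $k(i)$ slice-portions inside $\tau_{i+1}$, finds (by what it calls a standard Fubini argument, i.e.\ your pigeonhole count) two portions each more than half covered by $A$, and uses the time of strong recurrence carrying one portion onto the other to get a positive-measure overlap of $A$ with $T^{\vec u}A$; your explicit remark that one must choose a stage with $k(i)\ge 2$ is a point the paper leaves implicit. The genuine difference is ergodicity: the paper does not prove it at all, instead citing Theorem 4.2.1 of \cite{RudSilva}, whereas you give a short self-contained argument, and it is sound --- invariance of $E$ together with the fact that any two levels of $\tau_i$ are carried onto each other by the action forces $\mu(E\cap J)/\mu(J)$ to be constant over the levels $J$ of each tower, so applying Proposition~\ref{wellapprox} to finite-measure subsets of $E$ and of $E^c$ and passing to a common tower produces a level more than $(1-\epsilon)$-filled by both $E$ and $E^c$, impossible for $\epsilon<\frac{1}{2}$. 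What the citation buys the paper is brevity; what your route buys is a proof carried out entirely inside the combinatorial framework the paper has already built, using nothing beyond Proposition~\ref{wellapprox}, measure preservation, and the translation structure of the towers.
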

\begin{proof}
Let $(X,\mu,\{T^{\vec v}\}_{\vec v\in\mathbb Z^2})$ be a measure preserving, rank one $\mathbb Z^2$ action with $\{\tau_i\}$ an associated sequence of towers.  A proof of the ergodicity of $T$ can be found in Theorem 4.2.1 of \cite{RudSilva}.
 To see that $T$ is a recurrent $\mathbb Z^2$ action fix $A\subset X$, a measurable set of finite positive measure, and $\epsilon$ where $0<\epsilon<\frac14$.  Apply  Proposition~\ref{wellapprox} to obtain an $i_0$  and choose $i>i_0$. Let $I$ be the level of $\tau_i$ satisfying \eqref{more}.  By construction, $\tau_{i+1}$ is created from slices of $\tau_i$: in doing so the level $I$ is divided into $k(i)$ equal measure subintervals, each belonging to a slice $\tau_i^j$ of $\tau_i$.  By a standard Fubini argument there must be at least two of these subintervals, call them $I^{j_1}$ and $I^{j_2}$, for which $\mu (A\cap I^{j_k}) > (1-\sqrt{\epsilon})\mu (I^{j_k}) > \frac{1}{2} \mu (I^{j_k})$ for $k=1,2$.  Let $\vec u\in\mathcal P_1(i)$ denote the vector that maps the base of $\tau_i^{j_1}$ to the base of $\tau_i^{j_2}$.  In particular, $T^{\vec u}I^{j_1}=I^{j_2}$.  We are then guaranteed that $\mu\left(T^{\vec u}A\cap A\right)>0$.
 
 \end{proof}

Finally note that to construct a rank one $\mathbb Z^2$ action using the procedure we describe above it suffices to establish a value for $n_1$ and then for each $i$ to choose the number $k(i)$ of slices of $\tau_i$ used to construct $\tau_{i+1}$, along with the set $\mathcal P_i(1)\subset \mathbb Z^2$ describing the location of these slices.  These parameters completely determine the values of the rest of the $n_i, i\ge 2$ and sets $\mathcal P_i$.

\subsection{Proof of Theorem~\ref{none}}
To start the construction let $n_1=2$, and construct $\tau_1$ from the levels $\{[0,1),[1,2),[2,3),[3,4)\}$ indexed by $B_2$ with $I_{\vec 0}^1=[0,1)$.  Set $k(i)=2$ for all $i$ and let $\mathcal P_1(1)=\{\vec 0,(2,2)\}$.  

To define the induction let $\vec v_i^*$ denote the non-zero element of all the times of strong recurrence that have thus far been created that has the smallest slope.  Call this slope $m_i^*$.   Choose a positive vector $\vec v\in\mathbb Z^2$ 
 
and $k\in\mathbb Z$ so that  if $k\vec v=(x,y)$ then
\begin{align}
y&>n_i\label{grow}\\
\frac{y}{x}&<\frac12m_i^*\label{lesssteep}\\
k\vec v&\text{ lies below the $\vec v_i^*$-tunnel of $B_{n_i}$, and}\label{below}\\
\intertext{if $\vec w$ is a time of strong recurrence that has been created up to this point then}
k\vec v +B_{n_i} &\text{ does not intersect the $\vec w$-tunnels of $B_{n_i}$.}\label{nointer}
\end{align}
Let $\mathcal P_i(1)=\{\vec 0,k\vec v\}$.

Let $(X,\mu,\{T^{\vec v}\}_{\vec v\in\mathbb Z^2})$ denote the ergodic, infinite measure preserving, conservative $\mathbb Z^2$ action constructed in this way.  To see that $\mathcal R_T=\emptyset$ first consider the case where there exists a rational direction $\theta$ with an associated vector $\vec v\in\mathbb Z^2$ so that $\vec v\in\mathcal P_i$ for some $i$.  Choose the first stage $i$ for which this is true and let $s$ denote a spacer level from this stage.  If there exists $n\in\mathbb Z$ such that $\mu(T^{n\vec v}(s)\cap s)>0$ then by Lemma~\ref{wholetower} one slice of $\tau_i$ must intersect the $\vec v$-tunnel of another slice of $\tau_i$ at some stage $k>i$.  By condition \eqref{nointer} this is impossible since it implies that one of the two slices of $\tau_{k-1}$ must then intersect the $\vec v$-tunnel of the other.

Now suppose $\theta$ does not have an associated vector in $\mathbb{Z}^2$ which is a time of strong recurrence.  If $\theta=0$, then if $\epsilon=1$, condition \eqref{grow} implies that all slices of $\tau_1$ save one have to lie above the $1$-tunnel of the horizontal axis.  
If $\theta\neq 0$, then by \eqref{lesssteep} we can find $i$ such that $\vec v_k^*$ has slope smaller than $\tan\theta$ for all $k\ge i$.  Let $s$ be a spacer level in $\tau_{i+1}$  and suppose there exists $\vec w\in\mathbb Z^2$ in the $\epsilon$-tunnel of $\theta$ for some $\epsilon>0$ such that $\mu(T^{\vec w}s\cap s)>0$.  Arguing as above this implies there exists a $k>i$ such that one slice of $\tau_k$ intersects the $\vec u(\theta)$-tunel of the other at stage $k+1$.  But by \eqref{below} and \eqref{nointer} this is impossible.

\subsection{Proof of Theorem~\ref{all}}
Let $\{\theta_j\}$ denote the set of rational directions.  We first describe the construction that will guarantee $\theta_j\in\mathcal R_T$ for all $j$.  Let $\mathcal U=\{u_i\}$ denote an enumeration of the elements of $\mathbb Z^2$.   
 The towers will be of shape $\overline B_n$ centered at the origin, and the construction can start with an arbitrary choice for $n_1$.  For $i>1$ the remaining parameters are chosen to be $k(i)=2$ and $\mathcal P_i(1)=\{\vec 0,\vec w_i\}$  with $\vec w_i=t_i\vec u_i$ where $t_i\in\mathbb Z$ is chosen so that $\vec w_i+\overline B_{n_i}$ is disjoint from $\overline B_{n_i}$.    

Let $(X,\mu,\{T^{\vec v}\}_{\vec v\in\mathbb Z^2})$ denote the resulting $\mathbb Z^2$ action.  To see that $\mathcal R_T$ contains all rational directions fix a rational $\theta_j$, a measurable set $A$ of finite positive measure, and $0<\epsilon<\frac14$ and apply Proposition~\ref{wellapprox} to obtain $i_0$.  Note that there are infinitely many vectors $\vec u_k\in\mathbb Z^2$ associated with $\theta_j$ and therefore there are infinitely many $i>i_0$ so that $\mathcal P_i(1)$ consists of $\vec 0$ and a vector associated with $\theta_j$.   For such an $i$ let $I$ be the level in $\tau^i$ satisfying \eqref{more}.  This implies that both slices of $I$ in $\tau^{i+1}$ must be strictly more than $\frac12$ covered by $A$ and therefore $\mu(T^{\vec u}A\cap A)>0$ for the non-zero vector $\vec u\in P_i(1)$ associated with $\theta_j$.  
 
It follows from Theorem~\ref{Gdelta} that $\mathcal R_T$ must contain other directions besides the $\theta_j$.  We now describe how to modify the construction so that we can guarantee the given collection of irrational directions $\alpha_1,\ldots,\alpha_k$ are not in $\mathcal R_T$.  First note that for any $\epsilon>0$ every time a vector $t\vec w$ associated to $\theta_k$ appears as a time of strong recurrence, it is in the $\epsilon$ tunnel for only the interval of directions $(\theta_k-\delta(\epsilon,t),\theta_k+\delta(\epsilon,t))$ where $\delta(\epsilon,t)=\arcsin\left(\frac{\epsilon}{\Vert t\vec w\Vert}\right)$.    
There is some flexibility in the construction to choose $t$ as large as we like, thereby shrinking this interval as much as we like.  We next describe how to use this flexibility to guarantee that none of the $\alpha_i$ are recurrent directions for $T$.  

Fix a sequence $\epsilon_i$ decreasing to zero and at each stage of the construction choose $t_i$ so that the interval $(\theta_{k(i)}-\delta(\epsilon_i,t),\theta_{k(i)}+\delta(\epsilon_i,t))$ does not contain any of the directions $\alpha_j$.   For any $\epsilon>0$ there exists $i_o$ such that for all $i\ge i_0$ $\epsilon_i<\epsilon$.  Let $I$ be a level of $\tau_{i_0}$.  By Lemma~\ref{wholetower} we have that for all $j$, $\alpha_j$ cannot be $\epsilon$ recurrent for $I$.

 \def\cprime{$'$}
\providecommand{\bysame}{\leavevmode\hbox to3em{\hrulefill}\thinspace}
\providecommand{\MR}{\relax\ifhmode\unskip\space\fi MR }
\providecommand{\MRhref}[2]{%
  \href{http://www.ams.org/mathscinet-getitem?mr=#1}{#2}
}
\providecommand{\href}[2]{#2}

\end{document}